\numberwithin{equation}{section}
\let\Re=\undefined\DeclareMathOperator*{\Re}{Re}
\let\Im=\undefined\DeclareMathOperator*{\Im}{Im}
\newcommand{\R}{\mathbb{R}}
\newcommand{\C}{\mathbb{C}}
\newcommand{\eps}{\varepsilon}
\newtheorem{theorem}{Theorem}[section]
\newtheorem{lemma}[theorem]{Lemma}
\newtheorem{proposition}[theorem]{Proposition}
\theoremstyle{definition}
\theoremstyle{remark}
\newcommand{\qtq}[1]{\quad\text{#1}\quad}
\begin{document}

\title[Inhomogeneous NLS]{Scattering below the ground state for the intercritical non-radial inhomogeneous NLS}

\author[Cardoso]{Mykael Cardoso}
\address{Department of Mathematics, UFPI, Brazil}
\email{mykael@ufpi.edu.br}
\author[Farah]{Luiz Gustavo Farah}
\address{Department of Mathematics, UFMG, Brazil}
\email{farah@mat.ufmg.br}
\author[Guzm\'an]{Carlos M. Guzm\'an}
\address{Department of Mathematics, UFF, Brazil}
\email{carlos.guz.j@gmail.com}
\author[Murphy]{Jason Murphy} 
\address{Department of Mathematics \& Statistics, Missouri S\&T, USA}
\email{jason.murphy@mst.edu}

\begin{abstract} We consider the focusing inhomogeneous nonlinear Schr\"odinger equation
\[
i\partial_t u + \Delta u + |x|^{-b}|u|^\alpha u = 0\qtq{on}\R\times\R^N,
\]
with $N\geq 2$, $0<b<\min\{\tfrac{N}{2},2\}$, and $\tfrac{4-2b}{N}<\alpha<\tfrac{4-2b}{N-2}$. These constraints make the equation mass-supercritical and energy-subcritical.  We extend the results of Farah--Guzm\'an \cite{FG2} and Miao--Murphy--Zheng \cite{MMZ} and prove scattering below the ground state with general initial data. 
\end{abstract}

\maketitle

\section{Introduction}

We consider the focusing inhomogeneous nonlinear Schr\"odinger equation
\begin{equation}\label{INLS}
\begin{cases}
& i\partial_t u + \Delta u + |x|^{-b}|u|^\alpha u = 0, \\
& u|_{t=0}=u_0 \in H^1(\R^N),
\end{cases}
\end{equation}
where $u:\R\times\R^N\to\C$ is a complex-valued function of space-time.  Here we work in dimensions $N\geq 2$ and choose the parameter $b$ so that $0<b<\min\{\tfrac{N}{2},2\}$.  The power $\alpha$ is chosen so that $\tfrac{4-2b}{N}<\alpha<\tfrac{4-2b}{N-2}$, where here and below we understand the upper bound to be $\infty$ when $N=2$. These constraints guarantee that \eqref{INLS} is \emph{mass-supercritical} but \emph{energy-subcritical}, as we now explain.

The equation \eqref{INLS} enjoys the scaling symmetry $u(t,x)\mapsto \lambda^{\frac{2-b}{\alpha}}u(\lambda^2 t,\lambda x)$, which identifies the unique invariant homogeneous $L^2$-based Sobolev space of initial data as $\dot H^{s_c}$, where 
\[
s_c:=\tfrac{N}{2}-\tfrac{2-b}{\alpha}.
\]
When $s_c=0$, the critical space is $L^2$, which is naturally associated to the conserved \emph{mass} of solutions, defined by
\[
M[u] = \int |u|^2 \,dx. 
\]
On the other hand, when $s_c=1$, the critical space is $\dot H^1$, which is naturally associated to the conserved \emph{energy} of solutions, defined by 
\[
E[u] = \int \tfrac12 |\nabla u|^2 - \tfrac1{\alpha+2}|x|^{-b}|u|^{\alpha+2}\,dx. 
\]
The constraints on $\alpha$ guarantee that $0<s_c<1$, which we call \emph{intercritical} (i.e. mass-supercritical and energy-subcritical).

The inhomogeneous NLS model has been the subject of a great deal of recent mathematical interest.  The well-posedness problem has been studied in works such as \cite{Farah, GENSTU, CARLOS, Dinh4, LeeSeo, CFG20}, with a focus on treating as wide of a range of parameters as possible (including taking $b$ to be as large as possible).  We will record the precise well-posedness result that we need in Section~\ref{S:notation} below. 

Our interest in this work is the problem of scattering for \eqref{INLS}.  Here we say that a solution to \eqref{INLS} \emph{scatters} (in $H^1$) if there exist $u_\pm\in H^1$ such that 
\[
\lim_{t\to\pm\infty}\|u(t) - e^{it\Delta}u_\pm\|_{H^1(\R^N)}=0,
\]
where $e^{it\Delta}$ is the free Schr\"odinger propagator, defined as the Fourier multiplier operator with symbol $e^{-it|\xi|^2}$.  Standard local well-posedness arguments show that scattering holds for solutions with sufficiently small initial data. On the other hand, if we let $Q$ denote the ground state solution to
\[
\Delta Q - Q + |x|^{-b}Q^{\alpha+1}=0
\]
(see \cite{GENSTU, Yanagida}), then we find that $u(t,x)=e^{it}Q(x)$ is a global, non-scattering solution to \eqref{INLS}.  Our goal is to find the sharp scattering threshold for solutions that are `below the ground state' in an appropriate sense.  In particular, we will describe the threshold in terms of scale-invariant quantities related to the mass and energy of the initial data. 

Our main result is the following.

\begin{theorem}\label{T} Suppose $N\geq 2$, $0<b<\min\{\tfrac{N}{2},2\}$, and $\tfrac{4-2b}{N}<\alpha<\tfrac{4-2b}{N-2}$. Suppose $u_0\in H^1(\R^N)$ obeys
\begin{equation}
E[u_0]^{s_c}M[u_0]^{1-s_c}<E[Q]^{s_c}M[Q]^{1-s_c}
\end{equation}
and
\begin{equation}
\|\nabla u_0\|_{L^2}^{s_c}\|u_0\|_{L^2}^{1-s_c}<\|\nabla Q\|_{L^2}^{s_c}\|Q\|_{L^2}^{1-s_c}.
\end{equation}
Then the corresponding solution $u$ to \eqref{INLS} is global-in-time and scatters. 
\end{theorem}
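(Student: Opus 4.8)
\emph{Strategy.} I would follow the virial/Morawetz route to scattering below the ground state used in \cite{MMZ}, which avoids concentration--compactness. The key structural feature of \eqref{INLS} that I would exploit is that the weight $|x|^{-b}$ concentrates the nonlinearity near the origin: away from the origin the nonlinear terms carry a gain of a power of the truncation radius. In the radial theory this gain is produced by the radial Sobolev embedding; here the inhomogeneity supplies it directly, which is what makes general (non-radial) data accessible. The argument has three parts: (i) a variational analysis producing global existence, a uniform $H^1$ bound, and a coercivity estimate; (ii) a localized virial/Morawetz estimate producing a global spacetime bound; (iii) a Strichartz bootstrap turning that bound into a finite scattering norm.

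\emph{Step 1 (variational analysis).} I would first record the sharp Gagliardo--Nirenberg inequality bounding $\int|x|^{-b}|f|^{\alpha+2}\,dx$ by a product of powers of $\|\nabla f\|_{L^2}$ and $\|f\|_{L^2}$, with $Q$ an optimizer, together with the Pohozaev identities for $Q$ --- which in particular give $K[Q]=0$ for the virial functional $K[f]:=\|\nabla f\|_{L^2}^2-\tfrac{N\alpha+2b}{2(\alpha+2)}\int|x|^{-b}|f|^{\alpha+2}\,dx$. Combining these with conservation of mass and energy, the two hypotheses, and a standard continuity argument, I would show that $u$ is defined for all $t\in\R$, that $\|\nabla u(t)\|_{L^2}^{s_c}\|u(t)\|_{L^2}^{1-s_c}<\|\nabla Q\|_{L^2}^{s_c}\|Q\|_{L^2}^{1-s_c}$ for all $t$, and in fact that there is $\delta>0$ with $K[u(t)]\ge\delta\|\nabla u(t)\|_{L^2}^2$ for all $t$; together with the energy identity this yields $\sup_t\|u(t)\|_{H^1}<\infty$.

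\emph{Steps 2 and 3 (Morawetz estimate and scattering).} For a radial weight $a_R$ equal to $|x|^2$ on $\{|x|\le R\}$, transitioning smoothly, and comparable to $R|x|$ on $\{|x|\gtrsim R\}$ (with $|\nabla a_R|=O(R)$ and the appropriate curvature bounds), I would differentiate the Morawetz action $M_R(t):=2\int\nabla a_R\cdot\mathrm{Im}\big(\bar u\,\nabla u\big)\,dx$ twice using \eqref{INLS}. On $\{|x|\le R\}$ the resulting density is a multiple of the truncated virial quantity $\int_{|x|\le R}|\nabla u|^2-\tfrac{N\alpha+2b}{2(\alpha+2)}\int_{|x|\le R}|x|^{-b}|u|^{\alpha+2}$; on the transition region it is nonnegative (an angular-gradient plus nonlinear Morawetz density); and on $\{|x|\gtrsim R\}$ the remaining terms are $O(R^{-2})$ from derivatives of $a_R$ hitting $|u|^2$ and $O(R^{-b})$ from the nonlinear terms, via Gagliardo--Nirenberg and the uniform $H^1$ bound. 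Using the coercivity of $K$ from Step~1, the bound $|M_R(t)|\lesssim R$, and control of the far-field kinetic error, integration in time and an optimization in $R$ would produce a global spacetime estimate on $u$ in a suitable weighted Lebesgue norm. Invoking the small-data/stability theory recorded in Section~\ref{S:notation} (and the fact that a global, uniformly-$H^1$-bounded solution with finite scattering norm scatters), I would partition $\R$ into finitely many intervals on which this estimate is small and run the usual Strichartz bootstrap --- with estimates adapted to the factor $|x|^{-b}$ --- to conclude that the scattering norm of $u$ is finite and hence that $u$ scatters as $t\to\pm\infty$.

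\emph{The main obstacle.} The crux is Step~2 for general data: controlling all the error terms in the localized virial/Morawetz identity, uniformly over the whole intercritical range $\tfrac{4-2b}{N}<\alpha<\tfrac{4-2b}{N-2}$ and all admissible $b$. The nonlinear errors are tamed by the $|x|^{-b}$ decay --- this is the point where the inhomogeneity replaces the radial Sobolev embedding --- but the far-field \emph{kinetic} term carries no such gain and is the genuine difficulty for non-radial data; handling it requires either absorbing it into the coercive term at the cost of a delicate choice of the truncation radius or supplementing the estimate with an interaction-Morawetz-type inequality. A secondary burden, pervasive throughout, is that every Strichartz and nonlinear estimate must accommodate the singular weight $|x|^{-b}$, which constrains the choice of exponents (of the kind underlying the well-posedness theory cited in Section~\ref{S:notation}).
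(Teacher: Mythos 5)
Your proposal takes a genuinely different route from the paper: a direct Dodson--Murphy-style virial/Morawetz argument in the spirit of \cite{DodMur}, rather than the Kenig--Merle concentration-compactness roadmap. You also misattribute the direct route to \cite{MMZ} --- in fact \cite{MMZ}, like the present paper, runs the full concentration-compactness/rigidity scheme; what is new there (and extended here) is the construction of nonlinear scattering solutions associated to profiles living far from the origin (Proposition~\ref{P:embed}), built by approximating with linear flows and invoking stability theory. That construction is the paper's central technical lemma, and your approach simply never encounters it.

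The price of bypassing the compactness step shows up exactly where you flag ``the main obstacle,'' and the gap is real. The localized virial produces the truncated quantity $8\int_{|x|\le R}|\nabla u|^2 - \tfrac{4(N\alpha+2b)}{\alpha+2}\int_{|x|\le R}|x|^{-b}|u|^{\alpha+2}$; to invoke the global coercivity of Lemma~\ref{L:coercive}(iii) one must then control $\int_{|x|>R}|\nabla u|^2$ uniformly in time. The weight $|x|^{-b}$ gives a gain $R^{-b}$ on the nonlinear far-field error and the lower-order density error is $O(R^{-2})$, as you say, but the kinetic tail carries no gain, and for a general non-radial sub-threshold solution there is no a priori reason it is small for any fixed $R$. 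In the paper this uniform smallness is precisely what pre-compactness of the minimal blowup solution buys (see \eqref{errors}); without the compactness machinery the virial argument does not close.

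Neither of your two suggested remedies is carried out, and the first cannot work as stated: one cannot absorb $8\int_{|x|>R}|\nabla u|^2$ into $\delta\|\nabla u\|_{L^2}^2$, since the former may be comparable to the full kinetic energy while $\delta<1$. An interaction-Morawetz supplement is what Dodson--Murphy actually use for non-radial pure-power NLS, but that is a substantial additional machine which must be re-derived in the presence of the singular weight and the broken translation symmetry; you would need to build it. So either that inequality has to be established, or the compactness machinery has to be assembled --- in which case Proposition~\ref{P:embed} (constructing scattering solutions with $|x_n|\to\infty$) re-emerges as the real difficulty, and your sketch does not touch it.
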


Theorem~\ref{T} is a direct extension of the results appearing in \cite{Campos, Dinh3, FG, FG2, MMZ, XZ19} (see also \cite{Dinh1} for scattering results in the defocusing case).  In particular, in \cite{Campos, FG, FG2} the same problem was considered with slightly stricter restrictions on the parameter $b$ and with the more significant restriction to \emph{radial} solutions.  In \cite{MMZ}, the authors introduced the essential new ingredient that allowed for the inclusion of non-radial initial conditions, albeit only in the case of the $3d$ cubic equation with $b\in(0,\tfrac12)$.  In the present work, we adapt this argument to treat the general (non-radial) intercritical setting and also extend the range of allowed parameters $(N,b,\alpha)$ beyond what has appeared in previous works. 

Our result fits into the broader context of sharp scattering thresholds for focusing intercritical nonlinear Schr\"odinger equations.  Such results were first established in the setting of the pure power-type NLS (see e.g. \cite{DHR, HR, Guevara, CFX, AkahoriNawa, DodMur}), while by now many extensions to related models are available (see e.g. \cite{KMVZ, FG, Zheng, LMM, Arora, KVZ, KVZ0}).  The general strategy in many of these works, as well as in the present paper, is to follow the `Kenig--Merle roadmap' of \cite{KM}, which entails reducing the problem of scattering for arbitrary solutions below the ground state threshold to the problem of precluding compact sub-threshold solutions (or `minimal blowup solutions').  The reduction to compact solutions relies heavily on concentration-compactness arguments, while the preclusion of such solutions is typically achieved using virial arguments.  For models like \eqref{INLS} involving broken symmetries (i.e. space translation in our setting), new difficulties arise in the construction of the minimal blowup solutions, specifically in the construction of certain nonlinear solutions to \eqref{INLS}.  In our case, we must show that we can construct scattering solutions associated to certain initial data involving translation parameters $x_n$ with $|x_n|\to\infty$.  Because of the broken translation symmetry, this cannot be achieved simply by incorporating a space-translation into a fixed nonlinear solution.  

In previous works on the inhomogeneous NLS (e.g. \cite{FG, FG2, Campos}), this issue was avoided by restricting to radial solutions---for such solutions, the translation parameters that one encounters may be chosen such that $x_n\equiv 0$.  In \cite{MMZ}, the authors adapted a strategy appearing in works such as \cite{KMVZZ2, KMVZ, LMM} (related in turn to many other recent works on dispersive equations with broken symmetries), demonstrating that in the setting of \eqref{INLS} one can use solutions to the \emph{linear} Schr\"odinger equation and a stability result for the nonlinear equation to construct solutions in the case $|x_n|\to\infty$; indeed, in the regime $|x|\to\infty$ one expects the effect of the nonlinearity $|x|^{-b}|u|^\alpha u$ to become very weak.  

In the present work, we utilize a similar argument and are thereby able to establish the existence of minimal blowup solutions (see Proposition~\ref{P:embed} and Proposition~\ref{P:exist}).  In this part of the argument, we also make some improvements to the allowed range of parameters $(N,b,\alpha)$ compared to the works \cite{Campos, FG, FG2}, which relies on the careful selection of suitable function spaces (see e.g. Lemma~\ref{L:NL}).  Once we have constructed our compact blowup solution, the rest of the argument follows from the virial argument essentially as in \cite{Campos, FG2}.  In fact, this also relies on the observation that profiles with $|x_n|\to\infty$ correspond to \emph{scattering} solutions to \eqref{INLS}.  In particular, this fact guarantees that such profiles do not appear when constructing minimal blowup solutions.  Accordingly, the `spatial center' $x(t)$ of our compact solution may be taken to be $x(t)\equiv 0$, which in turn facilitates the use of a localized virial argument. 

The rest of this paper is organized as follows:  In Section~\ref{S:notation} we first introduce some basic notation.  We also discuss the well-posedness theory and stability theory for \eqref{INLS}, as well as some variational analysis related to the ground state and the virial identity.  In Section~\ref{S:CC}, we discuss the concentration-compactness tools needed to construct minimal blowup solutions to \eqref{INLS}.  In Section~\ref{S:exist}, we show that if Theorem~\ref{T} fails, then we may construct a sub-threshold blowup solution to \eqref{INLS} with strong compactness properties.  Finally, in Section~\ref{S:proof} we describe how to use the virial argument to show that such a solution cannot exist, thus completing the proof of Theorem~\ref{T}.

\subsection*{Acknowledgements} M.C. was partially supported by Coordena\c c\~ao de Aperfei\c coamento de Pessoal de N\'ivel Superior - Brazil (CAPES). L.G.F. was partially supported by Coordena\c c\~ao de Aperfei\c coamento de Pessoal de N\'ivel Superior - Brazil (CAPES), Conselho Nacional de Desenvolvimento Cient\'ifico e Tecnol\'ogico - Brazil (CNPq) and Funda\c c\~ao de Amparo a Pesquisa do Estado de Minas Gerais - Brazil (FAPEMIG). C.M.G. was partially supported by Coordena\c c\~ao de Aperfei\c coamento de Pessoal de N\'ivel Superior - Brazil (CAPES).

\section{Notation and preliminaries}\label{S:notation}

We write $S^C=\R^{N}\backslash S$ for the complement of $S\subset\R^N$.  We write $a\lesssim b$ to denote $a\leq cb$ for some $c>0$, denoting dependence on various parameters with subscripts when necessary. If $a\lesssim b\lesssim a$, we write $a\sim b$. 

We utilize the standard Lebesgue spaces $L^p$, the mixed Lebesgue spaces $L_t^q L_x^r$, as well as the homogeneous and inhomogeneous Sobolev spaces $\dot H^{s,r}$ and $H^{s,r}$.  When $r=2$, we write $\dot H^{s,2}=\dot H^s$ and $H^{s,2}=H^s$.  If necessary, we use subscripts to specify which variable we are concerned with. We use $'$ to denote the H\"older dual. 

We also need the standard Littlewood--Paley projections $P_{\leq N}$, defined as Fourier multipliers, with the multiplier corresponding to a smooth cutoff to the
region $\{|\xi|\leq N\}$. In particular, we use the following Bernstein type inequality
\begin{equation}\label{BerIn}
\||\nabla|^sP_{\leq N} f\|_{L^2_x}\leq N^s\|f\|_{L^2_x}.
\end{equation}

\subsection{Well-posedness theory}

To discuss the well-posedness theory for \eqref{INLS}, we first recall the Strichartz estimates in the form that we will need them.  We call a pair of space-time exponents $(q,r)$ $\dot{H}^s$-admissible if they obey the scaling relation $\tfrac{2}{q}+\tfrac{N}{r}=\tfrac{N}{2}-s$ along with the constraints
\[
\begin{cases} \tfrac{2N}{N-2s}\leq r<\tfrac{2N}{N-2}, & N\geq 3, \\
\tfrac{2}{1-s} \leq r < \infty & N=2, \\
\tfrac{2}{1-2s}\leq r<\infty & N=1.
\end{cases}
\]
We call the pair $(q,r)$ $\dot{H}^{-s}$-admissible if they obey $\tfrac{2}{q}+\tfrac{N}{r}=\tfrac{N}{2}+s$ and the same constraints as above. 

Given $s\in\R$, we define $\mathcal{A}_s$ to be the set of $\dot H^{s}$-admissible pairs and introduce the Strichartz norm
\[
\|u\|_{S(\dot H^s)} = \sup_{(q,r)\in\mathcal{A}_s}\|u\|_{L_t^q L_x^r}.
\]
Here we assume space-time norms are taken over $\R\times\R^N$.  Restriction to a time interval $I$ and/or a subset $A\subset\R^N$ may be indicated by writing $S(\dot H^s(A);I)$.  

We define the dual Strichartz norm by
\[
\|g\|_{S'(\dot H^{-s})} = \inf_{(q,r)\in\mathcal{A}_{-s}}\|g\|_{L_t^{q'}L_x^{r'}},
\]
with similar notation for the restriction to subsets of $\R\times\R^N$. 

In the notation above, we may write the Strichartz estimates in the following form (see e.g. \cite{Cazenave, Kato, Foschi}):
\begin{align*}
\|e^{it\Delta}f\|_{S(\dot H^s)} & \lesssim \|f\|_{\dot H^{s}}, \\
\biggl\| \int_0^t e^{i(t-t')\Delta} g(t')\,dt'\biggr\|_{S(\dot H^s)} &\lesssim \|g\|_{S'(\dot H^{-s})}. 
\end{align*}

Strichartz estimates are one of the essential ingredients in the well-posedness theory for \eqref{INLS}, including small-data scattering, existence of wave operators, and stability theory.  Another key ingredient will be the following set of estimates on the nonlinearity. In particular we extend \cite[Lemma 4.7]{FG2} to all the intercritical regime and a larger range of the parameter $b$. Recall that $s_c=\tfrac{N}{2}-\tfrac{2-b}{\alpha}$ is the scaling invariant Sobolev index.
\begin{lemma}[Nonlinear estimates]\label{L:NL} Let $N\geq 2$, $0<b<\min\{2,\tfrac{N}{2}\}$ and $\tfrac{4-2b}{N}<\alpha<\tfrac{4-2b}{N-2}$. Then there exists $\theta\in(0,\alpha)$ sufficiently small that
\begin{itemize}
\item [(i)] $\left \||x|^{-b}|u|^\alpha v \right\|_{S'(\dot{H}^{-s_c})} \lesssim \| u\|^{\theta}_{L^\infty_tH^1_x}\|u\|^{\alpha-\theta}_{S(\dot{H}^{s_c})} \|v\|_{S(\dot{H}^{s_c})}$,
\item [(ii)] $\left\||x|^{-b}|u|^\alpha v \right\|_{S'(L^2)}\lesssim \| u\|^{\theta}_{L^\infty_tH^1_x}\|u\|^{\alpha-\theta}_{S(\dot{H}^{s_c})} \| v\|_{S(L^2)},
$
\item [(iii)] $\left\|\nabla (|x|^{-b}|u|^\alpha u)\right\|_{S'(L^2)}\lesssim\|u\|^{\alpha-\theta}_{S(\dot{H}^{s_c})}\left( \| u\|^{\theta}_{L^\infty_tH^1_x} \|\nabla u\|_{S(L^2)}+\| u\|^{1+\theta}_{L^\infty_tH^1_x}\right)$.
\end{itemize}
\end{lemma}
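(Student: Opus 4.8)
The plan is to prove all three estimates by a unified approach based on fractional product and chain rules together with the Strichartz estimates, splitting the spatial integral into the region near the origin (where $|x|^{-b}$ is singular but integrable in a suitable $L^\gamma$ sense) and the region away from the origin (where $|x|^{-b}$ is bounded). For each estimate I would first choose a convenient $\dot H^{\pm s_c}$- or $L^2$-admissible pair $(q,r)$ on the right-hand side of the dual Strichartz norm, and then use Hölder's inequality in the spatial variable to distribute the weight $|x|^{-b}$ onto a Lebesgue exponent $\gamma$ with $|x|^{-b}\in L^\gamma(|x|\le1)$ (requiring $\gamma b<N$) and $|x|^{-b}\in L^\gamma(|x|>1)$ (requiring $\gamma b>N$), handled separately on the two regions. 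The role of the parameter $\theta\in(0,\alpha)$ is to interpolate: one peels off $\theta$ powers of $u$ into $L_t^\infty H_x^1$ (controlled by the conserved mass/energy and Sobolev embedding) and leaves $\alpha-\theta$ powers in the scaling-critical Strichartz space $S(\dot H^{s_c})$. Taking $\theta>0$ but small gives the extra room in the Lebesgue exponents needed to accommodate the full intercritical range $\tfrac{4-2b}{N}<\alpha<\tfrac{4-2b}{N-2}$ and the enlarged range of $b$; at the endpoint $\theta=0$ the exponent bookkeeping would be too tight.

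For part (i), I would estimate $\||x|^{-b}|u|^\alpha v\|_{L_t^{q'}L_x^{r'}}$ on each region by pulling out $|x|^{-b}$ in $L_x^\gamma$, the $\theta$ copies of $u$ in $L_t^\infty L_x^{r_1}$ with $r_1$ chosen so that $H^1\hookrightarrow L^{r_1}$ (i.e. $2\le r_1\le \tfrac{2N}{N-2}$), and the remaining $\alpha-\theta$ copies of $u$ and the factor $v$ in appropriate space-time Lebesgue norms that are then recognized as (or dominated by, via Sobolev embedding within the Strichartz family) $S(\dot H^{s_c})$ norms. One checks that the resulting time exponent is indeed of the form $q'$ for an $\dot H^{-s_c}$-admissible pair and that the spatial exponents satisfy the Hölder relation; the scaling condition $s_c=\tfrac N2-\tfrac{2-b}{\alpha}$ is exactly what makes the exponents close. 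Part (ii) is essentially identical, replacing the $\dot H^{-s_c}$-admissible pair by an $L^2$-admissible pair and the final factor $\|v\|_{S(\dot H^{s_c})}$ by $\|v\|_{S(L^2)}$; the estimates on the $u$ factors are unchanged.

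For part (iii) I would apply the Leibniz rule $\nabla(|x|^{-b}|u|^\alpha u)=(\nabla|x|^{-b})|u|^\alpha u+|x|^{-b}\nabla(|u|^\alpha u)$; since $|\nabla|x|^{-b}|\sim|x|^{-b-1}$, the first term is controlled by exactly the argument of (ii) with $b$ replaced by $b+1$ (note $b+1<\tfrac N2+1$ and one must check integrability of $|x|^{-(b+1)\gamma}$ near the origin, which is where the constraint $b<2$, or more precisely $b+1<N$ together with the availability of small $\theta$, enters), producing the term $\|u\|^{1+\theta}_{L_t^\infty H^1_x}\|u\|^{\alpha-\theta}_{S(\dot H^{s_c})}$ after absorbing one extra power of $u$ into the $H^1$ norm via the weight gain; the second term is handled like (ii) with $\nabla u$ in the role of $v$, using the chain rule bound $|\nabla(|u|^\alpha u)|\lesssim|u|^\alpha|\nabla u|$, yielding $\|u\|^\theta_{L_t^\infty H^1_x}\|u\|^{\alpha-\theta}_{S(\dot H^{s_c})}\|\nabla u\|_{S(L^2)}$.

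The main obstacle is the exponent bookkeeping near the origin: one must verify that there is a single choice of $\theta\in(0,\alpha)$ and of admissible pairs for which the Hölder exponents on the region $|x|\le1$ are simultaneously admissible — in particular that $|x|^{-b}$ (and $|x|^{-b-1}$ in part (iii)) lies in the required $L^\gamma$, that the exponent $r_1$ for the $H^1$-factor stays in the Sobolev range, and that the space-time exponents for the critical factors remain within the $\dot H^{s_c}$-admissible or $\dot H^{-s_c}$-admissible windows (which are genuine open intervals, not just endpoints, in dimensions $N\ge2$). This is precisely the step where the restriction to radial data or to specific $(N,b,\alpha)$ was used in earlier works, and where the careful selection of function spaces buys the full intercritical range and larger $b$; I expect to organize this as an explicit choice of parameters depending on $(N,b,\alpha)$ with $\theta$ taken sufficiently small, followed by a finite set of elementary inequalities to check admissibility.
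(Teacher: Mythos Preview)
Your proposal is correct and follows essentially the same approach as the paper: split the spatial integral into $\{|x|\le 1\}$ and $\{|x|>1\}$, place $|x|^{-b}$ in $L^\gamma$ via H\"older (with a different $\gamma$ on each region), peel off $\theta$ copies of $u$ into $L_t^\infty H_x^1$ by Sobolev embedding, and put the remaining $\alpha-\theta$ copies of $u$ and the factor $v$ (or $\nabla u$) in explicitly chosen admissible Strichartz pairs, with small $\theta$ giving the slack needed for the full intercritical range. The only minor difference is that for part~(iii) in dimensions $N\ge 3$ the paper invokes Hardy's inequality (citing \cite{Campos}) to handle the $|x|^{-b-1}|u|^{\alpha+1}$ term, whereas your direct H\"older treatment of $|x|^{-b-1}$ is exactly what the paper itself does for $N=2$ and works just as well.
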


\begin{proof} 
In view of the singular factor $|x|^{-b}$ in the nonlinearity, we frequently divide our analysis in two regions. To this end we define $B\equiv B(0,1)=\{ x\in \mathbb{R}^N;|x|\leq 1\}$.

We introduce the parameters
\begin{equation}\label{PHsA1}  
\hat{q}=\tfrac{4\alpha(\alpha+2-\theta)}{\alpha(N\alpha+2b)-\theta(N\alpha-4+2b)},\quad \hat{r}=\tfrac{N\alpha(\alpha+2-\theta)}{\alpha(N-b)-\theta(2-b)},
\end{equation}
as well as
\begin{equation}\label{PHsA2}
\tilde{a}=\tfrac{2\alpha(\alpha+2-\theta)}{\alpha[N(\alpha+1-\theta)-2+2b]-(4-2b)(1-\theta)},\quad \hat{a}=\tfrac{2\alpha(\alpha+2-\theta)}{4-2b-(N-2)\alpha}.
\end{equation}
Then, for any small $\theta$, we have that $(\hat q,\hat r)$ is $L^2$-admissible, while $(\hat a, \hat r)$ is $\dot H^{s_c}$-admissible and $(\tilde a,\hat r)$ is $\dot H^{-s_c}$-admissible.  These exponents obey the scaling relations
\begin{equation}\label{LG1}
\tfrac{1}{\tilde{a}'}=\tfrac{\alpha-\theta}{\hat{a}}+\tfrac{1}{\hat{a}}\qtq{and}\tfrac{1}{\hat q'}=\tfrac{\alpha-\theta}{\hat{a}}+\tfrac{1}{\hat{q}}.   
\end{equation}

We first prove (i), using the pair $L_t^{\tilde a'} L_x^{\hat r'}$.  We let $r_1\in(\tfrac{1}{\theta},\infty)$ be a free parameter to be chosen below and define $\gamma$ so that 
\begin{equation}\label{LG1Hs2}
\tfrac{1}{\hat{r}'}=\tfrac{1}{\gamma}+\tfrac{1}{r_1}+\tfrac{\alpha-\theta}{\hat{r}}+\tfrac{1}{\hat{r}}.
\end{equation}
Then, with $A\in\{B,B^C\}$, we can first use H\"older's inequality to estimate 
 \begin{equation}\label{LG1Hs1}
\left \|  |x|^{-b}|u|^{\alpha} v \right\|_{L^{\hat{r}'}_x(A)} \lesssim \left\||x|^{-b}\right\|_{L^\gamma(A)}  \|u\|^{\theta}_{L^{\theta r_1}_x}   \|u\|^{\alpha-\theta}_{L_x^{\hat{r}}} \|v\|_{L^{\hat{r}}_x}.
\end{equation}
Using the scaling relations above, we derive 
 \begin{equation}\label{BRel}
\tfrac{N}{\gamma}-b=\tfrac{\theta(2-b)}{\alpha}-\tfrac{N}{r_1}.
\end{equation}
Thus, if $A=B$ and $N\geq 3$ we choose $r_1$ so that $\theta r_1=\tfrac{2N}{N-2}$, which therefore implies $|x|^{-b}\in L^\gamma(B)$.  If $N=2$ we just choose $\theta r_1>\tfrac{N\alpha}{2-b}$ so that the right hand side of \eqref{BRel} is positive.  On the other hand, if $A=B^C$, we choose $\theta r_1=2$, which similarly implies $|x|^{-b}\in L^\gamma(B^C)$. In both cases, we have by Sobolev embedding that $H^1\subset L^{\theta r_1}.$ Thus, continuing from above, we take the $L_t^{\tilde a'}$-norm, apply H\"older's inequality, and use the first scaling relation in \eqref{LG1} to obtain
\[
\left \|  |x|^{-b}|u|^{\alpha} v \right\|_{L_t^{\tilde a'} L_x^{\hat{r}'}(A)} \lesssim \|u\|_{L_t^\infty H_x^1}^\theta\|u\|_{L_t^{\hat a}L_x^{\hat r}}^{\alpha-\theta}\|v\|_{L_t^{\hat a}L_x^{\hat r}},
\]
which yields (i). The estimate of (ii) is similar.  In this case, we use the second scaling relation in \eqref{LG1} and derive
 \begin{equation}\label{LGHsii}
\left\|  |x|^{-b}|u|^{\alpha} v\right \|_{L_t^{\hat{q}'}L_x^{\hat{r}'}}\lesssim \|u\|^{\theta}_{L^\infty_tH^1_x}\|u\|^{\alpha-\theta}_{L_t^{\hat{a}} L_x^{\hat{r}}} \|v\|_{L^{\hat{q}}_tL^{\hat{r}}_x}.
\end{equation}

It remains to establish (iii). We must estimate two terms, one of the form $|x|^{-b}|u|^\alpha \nabla u $ and one of the form $|x|^{-b}[|x|^{-1}|u|^\alpha u]$. Using Hardy's inequality, we may estimate these two terms in essentially the same way, provided we choose function spaces in which Hardy's inequality may be applied.  For the case $N\geq 3$, such spaces were worked out explicitly in \cite[Lemma~2.7]{Campos}. Thus we will restrict our attention to the case $N=2$.

For the case $N=2$, we use the following exponents:
\begin{equation}\label{LGWP3}
\bar{a}=\tfrac{2(\alpha+1-\theta)}{1-s_c+\theta},\quad \bar{r}=\tfrac{2\alpha(\alpha+1-\theta)}{\alpha(1-b+s_c)+2-b-\theta(2-b+\alpha)},\quad \bar{q}=\tfrac{2(\alpha+1-\theta)}{1+\alpha s_c+\theta(1-s_c)}
\end{equation}
and
\begin{equation}\label{LGWP4}
a^*=\tfrac{2(\alpha-\theta)}{1+\theta},\quad r^*=\tfrac{2\alpha(\alpha-\theta)}{\alpha(1-b)-\theta(2-b+\alpha)},\quad q=\tfrac{2}{1-\theta},\quad r=\tfrac{2}{\theta},
\end{equation}
for $0<\theta\ll 1$.  In particular, for $\theta$ sufficiently small we have that $(\bar q,\bar r)$ is $L^2$-admissible and that $(\bar a,\bar r)$, $(a^*, r^*)$ are $\dot H^{s_c}$-admissible.  Furthermore, we have the scaling relations
\begin{equation}\label{GWP4}
\tfrac{1}{q'}=\tfrac{\alpha-\theta}{\bar{a}}+\tfrac{1}{\bar{q}}\qtq{and}(\alpha-\theta)q'=a^*.    
\end{equation}

As before, we take $A\in\{B,B^C\}$ and from \eqref{GWP4} we deduce 
\begin{align*}
\bigl\| & \nabla \left( |x|^{-b}|u|^{\alpha} u\right) \bigr\|_{L^{q'}_tL^{r'}_x(A)} \\
& \leq \left \|\left\||x|^{-b}\right\|_{L^\gamma(A)}\|u\|_{L_{x}^{\theta r_1}}^{\theta}\|u\|^{\alpha-\theta}_{L_x^{\bar{r}}} \|\nabla u \|_{L^{\bar{r}}_x}\right\|_{L^{q'}_t}\\
& \quad + \left\|\left\||x|^{-b-1}\right\|_{L^{\tilde\gamma}(A)}\|u\|_{L_{x}^{(\theta+1)p_1}}^{\theta+1} \|u\|^{\alpha-\theta}_{L_{x}^{r^*}}\right\|_{L^{q'}_t} \\
 & \lesssim  \|u\|_{L_t^\infty L_{x}^{\theta r_1}}^{\theta}\|u\|^{\alpha-\theta}_{L_t^{\bar{a}}L_x^{\bar{r}}} \|\nabla u \|_{L^{\bar{q}}_tL^{\bar{r}}_x}+\|u\|_{L_t^\infty L_{x}^{(\theta+1)p_1}}^{\theta+1} \|u\|^{\alpha-\theta}_{L_{t}^{a^*}L_{x}^{r^*}},
\end{align*}
where $r_1$ and $p_1$ will be chosen below and the choice of $\gamma,\tilde\gamma$ is then dictated by H\"older's inequality, i.e.
\[
\tfrac{1}{r'}=\tfrac{1}{\gamma}+\tfrac{1}{r_1}+\tfrac{\alpha-\theta}{\bar{r}}+\tfrac{1}{\bar{r}}=\tfrac{1}{\tilde\gamma}+\tfrac{1}{p_1}+\tfrac{\alpha-\theta}{r^*}.
\]
Using the definition of $r$, $\bar{r}$ and $r^*$ in \eqref{LGWP3}-\eqref{LGWP4} one has
\[
\tfrac{2}{\gamma}-b=\tfrac{\theta(2-b)}{\alpha}-\tfrac{2}{r_1}, \quad \tfrac{2}{\tilde \gamma}-b-1=\tfrac{\theta(2-b)}{\alpha}-\tfrac{2}{p_1}.
\]
If $A=B$, then we choose $\theta r_1$ and $\theta p_1\in (\tfrac{2\alpha}{2-b},\infty)$ (and observe that $\tfrac{2\alpha}{2-b}>2$). We then arrive at an acceptable estimate for the weights, as $\frac{2}{\gamma}-b>0$ and $\frac{2}{\tilde \gamma}-b-1>0$. If instead $A=B^C$, then we choose $\theta r_1, \theta p_1\in(2,\frac{2\alpha}{2-b})$ and again obtain a suitable estimate.  Using the Sobolev embedding $H^1\subset L^{\theta r_1} \cup L^{(\theta +1)p_1}$, we finally arrive at the estimate
\[
\left\| \nabla \left( |x|^{-b}|u|^{\alpha} u\right) \right\|_{L^{q'}_tL^{r'}_x}\leq  \|u\|_{L_t^\infty H_{x}^1}^{\theta}\|u\|^{\alpha-\theta}_{L_t^{\bar{a}}L_x^{\bar{r}}} \|\nabla u \|_{L^{\bar{q}}_tL^{\bar{r}}_x}+\|u\|_{{L_t^\infty H_{x}^1}}^{\theta+1} \|u\|^{\alpha-\theta}_{L_{t}^{a^*}L_{x}^{r^*}},   
\]
which completes the proof of (iii).\end{proof} 

With these nonlinear estimates in place, standard arguments utilizing Strichartz estimates suffice to establish the following result (see e.g. \cite[Proposition 1.4 and Proposition 4.15]{FG2})
\begin{proposition}[Well-posedness]\label{P:LWP} Let $N\geq 2$, $0<b<\min\{2,\tfrac{N}{2}\}$ and $\tfrac{4-2b}{N}<\alpha<\tfrac{4-2b}{N-2}$. 
\begin{itemize}
\item[(i)] For any $u_0\in H^1$, there exists a local-in-time solution to \eqref{INLS}.  The time of existence depends on the $H^1$-norm of $u_0$.  In particular, any solution that remains uniformly bounded in $H^1$ throughout its lifespan exists for all time.  If $u_0$ is sufficiently small in $H^1$, then the corresponding solution to \eqref{INLS} is global-in-time and scatters.
\item[(ii)] A solution $u$ to \eqref{INLS} may be extended as long as its $S(\dot H^{s_c})$-norm does not blow up.  A global solution that remains bounded in $H^1$ and has finite $S(\dot H^{s_c})$-norm scatters in both time directions. 
\item[(iii)] For any $\psi\in H^1$, satisfying 
\begin{equation}\label{finalstate-subthreshold}
\|\psi\|_{L^2}^{2(1-s_c)}\|\nabla\psi\|_{L^2}^{2s_c} < 2^{s_c}M[Q]^{1-s_c}E[Q]^{s_c},
\end{equation}
there exists a global and uniformly bounded in $H^1$ solution $u$ to \eqref{INLS} that scatters to $\psi$ as $t\to\infty$, that is
\[
\lim_{t\to\infty}\|u(t) - e^{it\Delta}\psi\|_{H^1(\R^N)}=0.
\] 
\end{itemize}
\end{proposition}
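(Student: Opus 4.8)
All three parts rest on the Strichartz estimates together with the multilinear bounds of Lemma~\ref{L:NL}; I sketch each in turn, the bulk of the new content lying in part (iii). For (i), I would run a contraction-mapping argument for the Duhamel map $\Phi(u)(t) = e^{it\Delta}u_0 - i\int_0^t e^{i(t-t')\Delta}(|x|^{-b}|u|^\alpha u)(t')\,dt'$ on a ball in the space $\{u : \|u\|_{L_t^\infty H^1_x(I)} + \|u\|_{S(\dot H^{s_c};I)} + \|\nabla u\|_{S(L^2;I)} \le 2C\|u_0\|_{H^1}\}$ over a short interval $I=[0,T]$. Strichartz gives $\|e^{it\Delta}u_0\|$ in this norm $\lesssim\|u_0\|_{H^1}$, while Lemma~\ref{L:NL}(i)--(iii) control the Duhamel term; since every one of those estimates carries a factor $\|u\|_{S(\dot H^{s_c};I)}^{\alpha-\theta}$, which can be made small by shrinking $|I|$ (using $\|e^{it\Delta}u_0\|_{S(\dot H^{s_c};I)}\to 0$ as $|I|\to 0$), both the self-mapping and the contraction close, with a lifespan depending only on $\|u_0\|_{H^1}$. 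This yields the blow-up alternative and global existence for $H^1$-bounded solutions, and running the same argument globally for small $\|u_0\|_{H^1}$ gives small-data scattering. For (ii), the extension criterion is immediate by iterating the local theory with a uniform step on any interval where $\|u\|_{S(\dot H^{s_c})}$ and $\|u\|_{L_t^\infty H_x^1}$ stay finite; and if $u$ is global with both norms finite, then Lemma~\ref{L:NL}(ii)--(iii) show $|x|^{-b}|u|^\alpha u$ and $\nabla(|x|^{-b}|u|^\alpha u)$ lie in $S'(L^2)$, so $\int_0^\infty e^{-it'\Delta}(|x|^{-b}|u|^\alpha u)(t')\,dt'$ converges in $H^1$, defining a scattering state $u_+$ with $\|u(t)-e^{it\Delta}u_+\|_{H^1}\to 0$, and likewise at $-\infty$.

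For (iii) I would first solve the final-value problem on $[T,\infty)$ for $T$ large: since $\psi\in H^1$ one has $\|e^{it\Delta}\psi\|_{S(\dot H^{s_c};[T,\infty))}\to 0$ as $T\to\infty$, so the argument of part (i) applied to $u(t) = e^{it\Delta}\psi - i\int_t^\infty e^{i(t-t')\Delta}(|x|^{-b}|u|^\alpha u)(t')\,dt'$ produces a solution $u$ on $[T,\infty)$, uniformly bounded in $H^1$ with finite Strichartz norms, obeying $\|u(t)-e^{it\Delta}\psi\|_{H^1}\to 0$. The crux is then to upgrade this to a global, $H^1$-bounded solution. I would pass to the limit $t\to\infty$ in the conserved quantities: $H^1$-scattering gives $\|u(t)\|_{L^2}\to\|\psi\|_{L^2}$ and $\|\nabla u(t)\|_{L^2}\to\|\nabla\psi\|_{L^2}$, while the potential energy vanishes, $\int|x|^{-b}|u(t)|^{\alpha+2}\,dx\to 0$, as one sees by localizing at $|x|=1$: on $\{|x|>1\}$ bound by $\|u(t)\|_{L^{\alpha+2}_x}^{\alpha+2}$, and on $\{|x|\le 1\}$ by H\"older with $|x|^{-b}\in L^p(B)$ for suitable $p$, in each case using that $\|u(t)\|_{L^r_x}\to 0$ for $2<r<\tfrac{2N}{N-2}$ (which holds since $u(t)-e^{it\Delta}\psi\to 0$ in $H^1$ and $\|e^{it\Delta}\psi\|_{L^r_x}\to 0$ for such $r$ and any $\psi\in H^1$ by density and the dispersive estimate). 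Hence $M[u]=\|\psi\|_{L^2}^2$ and $E[u]=\tfrac12\|\nabla\psi\|_{L^2}^2$.

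Substituting these identities, the hypothesis \eqref{finalstate-subthreshold} is exactly equivalent to $E[u]^{s_c}M[u]^{1-s_c} < E[Q]^{s_c}M[Q]^{1-s_c}$, and, using the Pohozaev identities for $Q$ from the variational analysis of this section, it also forces $\|\nabla\psi\|_{L^2}^{s_c}\|\psi\|_{L^2}^{1-s_c} < \|\nabla Q\|_{L^2}^{s_c}\|Q\|_{L^2}^{1-s_c}$; thus $u(t)$ lies below both thresholds of Theorem~\ref{T} for $t$ large. The energy-trapping (coercivity) lemma then propagates $\|\nabla u(t)\|_{L^2}^{s_c}\|u(t)\|_{L^2}^{1-s_c} < \|\nabla Q\|_{L^2}^{s_c}\|Q\|_{L^2}^{1-s_c}$ to the entire interval of existence, and on that region $\|\nabla u(t)\|_{L^2}^2\lesssim E[u]$ uniformly; together with mass conservation this gives $\sup_t\|u(t)\|_{H^1}<\infty$, whence global existence by part (i). The solution so obtained is global, uniformly bounded in $H^1$, and scatters to $\psi$ as $t\to\infty$.

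The main obstacle is not any single estimate but the bookkeeping in part (iii): verifying that the singular weight $|x|^{-b}$ does not obstruct the vanishing of $\int|x|^{-b}|u(t)|^{\alpha+2}\,dx$ (requiring the split near and away from the origin and a Hardy--Sobolev/H\"older argument), and carrying out the exact arithmetic showing that \eqref{finalstate-subthreshold} delivers \emph{both} sub-threshold inequalities of Theorem~\ref{T} so that the coercivity lemma applies. The construction step on $[T,\infty)$, routine in outline, is where the enlarged parameter range of Lemma~\ref{L:NL} is essential, since without those estimates the fixed point cannot be closed in $H^1$ across the full intercritical range of $(N,b,\alpha)$.
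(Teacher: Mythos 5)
Your proposal is correct and follows essentially the same approach as the paper's (very brief) proof, which defers items (i)--(ii) to the standard literature and for (iii) says only that one solves the final-value problem near $t=\infty$ and then uses \eqref{finalstate-subthreshold} together with Lemma~\ref{L:coercive} to obtain an a priori $H^1$ bound enabling global extension. You have filled in the arithmetic the paper omits --- the passage from $H^1$-scattering to $M[u]=\|\psi\|_{L^2}^2$ and $E[u]=\tfrac12\|\nabla\psi\|_{L^2}^2$ (via vanishing of the potential energy), and the elementary observation $2E[Q]<\|\nabla Q\|_{L^2}^2$ which converts \eqref{finalstate-subthreshold} into both sub-threshold conditions needed for the coercivity lemma --- but the underlying strategy is identical.
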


\begin{proof} Item (i) is the standard well-posedness result, while item (ii) gives the standard scattering criterion. Item (iii) gives the existence of wave operators.  To prove it, one firstly solves the local problem around $t=\infty$. The condition \eqref{finalstate-subthreshold} guarantees (using Lemma~\ref{L:coercive} below) that the solution constructed lies below the ground state and in particular has uniformly bounded $H^1$-norm throughout its lifespan, which in turn allows the solution to be extended for all time. 
\end{proof}

In addition, we will need the standard stability result for equation \eqref{INLS} (see e.g. \cite[Proposition 4.14]{FG2}). 

\begin{proposition}[Stability]\label{stability} Let $N\geq 2$, $0<b<\min\{2,\tfrac{N}{2}\}$, and $\tfrac{4-2b}{N}<\alpha<\tfrac{4-2b}{N-2}$. Let $0\in I\subseteq \mathbb{R}$ and $\tilde u:I\times \mathbb{R}^N\to\C$ be a solution to 
\begin{equation*}
i\partial_t \tilde{u} +\Delta \tilde{u} + |x|^{-b} |\tilde{u}|^\alpha \tilde{u} =e,
\end{equation*}  
with initial data $\tilde{u}_0\in H^1(\mathbb{R}^N)$ satisfying \begin{equation*}
\sup_{t\in I}  \|\tilde{u}\|_{H^1_x}\leq M \;\; \textnormal{and}\;\; \|\tilde{u}\|_{S(\dot{H}^{s_c}; I)}\leq L.
\end{equation*}
for some positive constants $M,L$.

Let $u_0\in H^1(\mathbb{R}^N)$ such that 
\begin{equation*}
\|u_0-\tilde{u}_0\|_{H^1}\leq M'\qtq{and} \|e^{it\Delta}[u_0-\tilde{u}_0]\|_{S(\dot{H}^{s_c}; I)}\leq \varepsilon,
\end{equation*}
for some positive constant $M'$ and some $0<\varepsilon<\varepsilon_1=\varepsilon_1(M,M',L)$.

Suppose further that
\begin{equation*}
\|e\|_{S'(L^2; I)}+\|\nabla e\|_{S'(L^2; I)}+ \|e\|_{S'(\dot{H}^{-s_c}; I)}\leq \varepsilon.
\end{equation*}
\indent Then, there exists a unique solution $u:I\times\R^3\to\C$ with $u|_{t=0}=u_0$ obeying	
\begin{align*}
\|u-\tilde{u}\|_{S(\dot{H}^{s_c}; I)}&\lesssim_{M,M',L}\varepsilon,\\
\|u\|_{S(\dot{H}^{s_c}; I)} +\|u\|_{S(L^2; I)}+\|\nabla u\|_{S(L^2; I)}& \lesssim_{M,M',L} 1.
\end{align*} 
\end{proposition}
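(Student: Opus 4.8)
The proof is a standard perturbative argument (see e.g. \cite[Proposition~4.14]{FG2} and the pure-power analogues), the only genuinely inhomogeneous-specific feature being the bookkeeping forced by the nonlinear estimates of Lemma~\ref{L:NL}. The plan is to partition $I$ into finitely many subintervals on which the relevant Strichartz norms are small, run a contraction/continuity argument for the difference $w=u-\tilde u$ on each piece, and then iterate and sum.

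\emph{Upgrading the a priori bounds on $\tilde u$.} First I would partition $I$ into $J=J(L,\eta)$ consecutive subintervals $I_j=[t_j,t_{j+1}]$ with $\|\tilde u\|_{S(\dot H^{s_c};I_j)}\leq\eta$, $\eta$ a small absolute constant, which is possible since $\|\tilde u\|_{S(\dot H^{s_c};I)}\leq L$. Applying the Strichartz estimates to the Duhamel formula for $\tilde u$ on each $I_j$, using Lemma~\ref{L:NL}(ii)--(iii) together with $\sup_I\|\tilde u\|_{H^1_x}\leq M$ to bound the nonlinear terms by $\eta^{\alpha-\theta}$ times the quantities being estimated, and absorbing the $S'(L^2)$- and $S'(\dot H^{-s_c})$-norms of $e$, one obtains (after summing over the $J$ intervals)
\[
\|\tilde u\|_{S(\dot H^{s_c};I)}+\|\tilde u\|_{S(L^2;I)}+\|\nabla\tilde u\|_{S(L^2;I)}\lesssim_{M,L}1.
\]
Here one uses that the pair $(\infty,2)$ is admissible, so $L^\infty_tH^1_x$ control is already contained in the $S(L^2)$- and $\nabla$-$S(L^2)$-norms.

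\emph{The difference equation and nonlinear estimates.} Setting $w=u-\tilde u$, subtracting the two equations shows that $w$ solves
\[
i\partial_t w+\Delta w+|x|^{-b}\bigl(|\tilde u+w|^\alpha(\tilde u+w)-|\tilde u|^\alpha\tilde u\bigr)=-e,\qquad w(0)=u_0-\tilde u_0 ,
\]
so that on any subinterval $J\subseteq I$,
\[
w(t)=e^{i(t-t_j)\Delta}w(t_j)+i\int_{t_j}^t e^{i(t-s)\Delta}\bigl[G(w)(s)+e(s)\bigr]\,ds,\qquad G(w):=|x|^{-b}\bigl(|\tilde u+w|^\alpha(\tilde u+w)-|\tilde u|^\alpha\tilde u\bigr).
\]
Using the pointwise estimates $\bigl||z_1|^\alpha z_1-|z_2|^\alpha z_2\bigr|\lesssim(|z_1|^\alpha+|z_2|^\alpha)|z_1-z_2|$ and the corresponding bound for the gradient, so that $|G(w)|\lesssim|x|^{-b}(|\tilde u|^\alpha+|w|^\alpha)|w|$, Lemma~\ref{L:NL}(i)--(iii) gives, writing $\|f\|_{X(J)}:=\|f\|_{S(\dot H^{s_c};J)}+\|f\|_{S(L^2;J)}+\|\nabla f\|_{S(L^2;J)}$,
\begin{align*}
\|G(w)\|_{S'(\dot H^{-s_c};J)}&+\|G(w)\|_{S'(L^2;J)}+\|\nabla G(w)\|_{S'(L^2;J)}\\
&\lesssim_{M,M'}\bigl(\|\tilde u\|_{X(J)}+\|w\|_{X(J)}\bigr)^{\alpha-\theta}\bigl(1+\|\tilde u\|_{X(J)}+\|w\|_{X(J)}\bigr)\,\|w\|_{X(J)},
\end{align*}
where the $L^\infty_tH^1_x$ factors in Lemma~\ref{L:NL} are controlled via $\sup_I\|\tilde u\|_{H^1_x}\le M$, $\|u_0-\tilde u_0\|_{H^1}\le M'$, and again the admissibility of $(\infty,2)$. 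It is precisely this need to feed an $H^1$-type factor into Lemma~\ref{L:NL} that forces us to propagate the $S(L^2)$- and $\nabla$-$S(L^2)$-norms alongside the scaling-critical $S(\dot H^{s_c})$-norm.

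\emph{Iteration and conclusion.} On $I_0$, Strichartz applied to the Duhamel formula for $w$, combined with the estimate above and the hypotheses $\|e^{it\Delta}(u_0-\tilde u_0)\|_{S(\dot H^{s_c};I)}\le\varepsilon$, $\|u_0-\tilde u_0\|_{H^1}\le M'$ and $\|e\|_{S'(L^2;I)}+\|\nabla e\|_{S'(L^2;I)}+\|e\|_{S'(\dot H^{-s_c};I)}\le\varepsilon$, yields (by a standard continuity/bootstrap argument, valid for $\varepsilon$ small) $\|w\|_{X(I_0)}\lesssim_{M,M',L}\varepsilon$. To pass to $I_{j+1}$, one applies Strichartz once more to
\[
e^{i(t-t_{j+1})\Delta}w(t_{j+1})=e^{i(t-t_j)\Delta}w(t_j)+i\int_{t_j}^{t_{j+1}}e^{i(t-s)\Delta}\bigl[G(w)+e\bigr]\,ds,
\]
so that the $\dot H^{s_c}$-Strichartz norm of the linear evolution from $t_{j+1}$ is controlled by that from $t_j$ plus the small nonlinear/error contributions on $I_j$. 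Iterating over the $J=J(M,L)$ subintervals, the implicit constants compound only finitely many times, so choosing $\varepsilon_1=\varepsilon_1(M,M',L)$ small enough that all smallness conditions invoked along the way hold gives $\|w\|_{X(I_j)}\lesssim_{M,M',L}\varepsilon$ for every $j$; summing the $J$ pieces yields $\|u-\tilde u\|_{S(\dot H^{s_c};I)}\lesssim_{M,M',L}\varepsilon$, and then $\|u\|_{S(\dot H^{s_c};I)}+\|u\|_{S(L^2;I)}+\|\nabla u\|_{S(L^2;I)}\lesssim_{M,M',L}1$ by the triangle inequality together with the first step. Uniqueness is immediate from the same contraction estimate on small subintervals.

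\emph{Main obstacle.} Structurally the argument is entirely routine; the one delicate point is that the inhomogeneity $|x|^{-b}$ prevents one from closing the perturbation purely in the critical space $S(\dot H^{s_c})$. The estimates of Lemma~\ref{L:NL} cost an $L^\infty_tH^1_x$ factor, so the whole scheme — for both $\tilde u$ and $w$ — must be run simultaneously for the $\dot H^{s_c}$-, $L^2$-, and $\dot H^1$-Strichartz norms, with the small parameter $\theta\in(0,\alpha)$ supplying exactly the room needed to absorb those top-order $H^1$ factors while keeping the $S(\dot H^{s_c})$-power $\alpha-\theta$ genuinely subcritical on each short interval. Distinguishing, at every stage, the norms that are made \emph{small} (powers of $S(\dot H^{s_c})$-norms over short intervals) from those that are merely \emph{bounded} (the $L^\infty_tH^1_x$ factors) is the only real bookkeeping burden; once $N\geq 3$ is handled as in \cite{Campos} and $N=2$ via the exponents of Lemma~\ref{L:NL}, the perturbative estimates go through uniformly across the intercritical range.
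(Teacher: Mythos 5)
The paper does not actually prove this proposition: it simply cites it to \cite[Proposition 4.14]{FG2} as a ``standard stability result,'' so there is no in-paper proof to compare against. Your sketch is the canonical long-time perturbation argument — partition $I$ into finitely many subintervals where $\|\tilde u\|_{S(\dot H^{s_c})}$ is small, upgrade the a priori bounds on $\tilde u$ to all three Strichartz families, set up the Duhamel equation for $w=u-\tilde u$, close a continuity argument using Lemma~\ref{L:NL}, and iterate — and this is essentially what the cited reference does; the overall structure is correct, as is your observation that the $L^\infty_t H^1_x$ factors in Lemma~\ref{L:NL} force one to carry the $S(L^2)$ and $\nabla S(L^2)$ norms alongside $S(\dot H^{s_c})$.

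One point you gloss over deserves to be named, because it can occur in the paper's parameter range: when $\alpha<1$ (which is allowed for $N\geq 3$ with $\alpha$ near the mass-critical endpoint, and in all dimensions $N\geq 6$), the ``corresponding bound for the gradient'' of the difference nonlinearity is not linear in $w$. Concretely, differentiating $G(w)$ produces a term of the shape $|x|^{-b}\bigl(|\tilde u+w|^{\alpha}-|\tilde u|^{\alpha}\bigr)\nabla\tilde u$, and for $\alpha<1$ one only has $\bigl||\tilde u+w|^{\alpha}-|\tilde u|^{\alpha}\bigr|\lesssim|w|^{\alpha}$; thus $\|\nabla G(w)\|_{S'(L^2;J)}$ is controlled by $\|w\|_{X(J)}^{\alpha}$ rather than $\|w\|_{X(J)}$, and your displayed estimate for the $G(w)$ norms with a clean linear factor $\|w\|_{X(J)}$ is not literally correct in this regime. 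This does not break the proposition, because the stated conclusion only requires $\|w\|_{S(\dot H^{s_c};I)}\lesssim\varepsilon$ (and Lemma~\ref{L:NL}(i) applied to the difference does give linear dependence on $w$ without touching gradients), while for $\|u\|_{S(L^2;I)}+\|\nabla u\|_{S(L^2;I)}$ one only needs boundedness, not smallness. The standard fix — and the one implicit in \cite{FG2} — is to run the contraction first in the $S(\dot H^{s_c})$-norm of $w$ alone, and then use that together with the upgraded bounds on $\tilde u$ to control the remaining norms of $u$ (not $w$). Your proof as written would need this small reorganization in the sub-unital range of $\alpha$; aside from that it is sound.
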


\subsection{Variational analysis} In this section we collect a few results from \cite{Farah} related to the ground state $Q$, which is the unique radial, nonnegative, decaying solution to 
\[
-\Delta Q - Q + |x|^{-b}Q^{\alpha+1} = 0
\]
(see \cite{GENSTU, Yanagida}). This solution may be constructed as an optimizer to the Gagliardo--Nirenberg inequality
\[
\|\,|x|^{-b}|u|^{\alpha+2}\|_{L^1} \leq C_{GN} \|\nabla u\|_{L^2}^{\frac{N\alpha+2b}{2}}\|u\|_{L^2}^{\frac{4-2b-\alpha(N-2)}{2}}.
\]
In particular, the sharp constant can be expressed in terms of the parameters $(N,\alpha,b)$ and the $L^2$ and $\dot H^1$ norms of $Q$.  Using this, one can obtain the following coercivity result (see e.g. \cite[Lemma 4.2]{FG2}).  
\begin{lemma}[Coercivity]\label{L:coercive} Let $v\in H^1$ obey
\begin{equation}\label{coercivity-assumption}
M[v]^{1-s_c}E[v]^{s_c}<M[Q]^{1-s_c}E[Q]^{s_c} \qtq{and} \|u\|_{L^2}^{1-s_c}\|\nabla u\|_{L^2}^{s_c}\leq \|Q\|_{L^2}^{1-s_c}\|\nabla Q\|_{L^2}^{s_c}.
\end{equation}
Then we have the following:
\begin{itemize}
\item[(i)] $E[v] \sim \|\nabla v\|_{L^2}^2$, 
\item[(ii)] $\|v\|_{L^2}^{1-s_c}\|\nabla v\|_{L^2}^{s_c} <(1-\delta)\|Q\|_{L^2}^{1-s_c}\|\nabla Q\|_{L^2}^{s_c}$ for some $\delta>0$, 
\item[(iii)]  $8\|\nabla v\|_{L^2}^2-\tfrac{4(N\alpha+2b)}{\alpha+2}\|\,|x|^{-b} |v|^{\alpha+2}\|_{L^1} >\delta\|\nabla v\|_{L^2}^2$ for some $\delta>0$. 
\end{itemize}
\end{lemma}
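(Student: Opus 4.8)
The plan is to reduce the lemma to the sharp Gagliardo--Nirenberg inequality stated above together with the Pohozaev-type identities satisfied by $Q$. Testing the ground-state equation against $Q$ and against $x\cdot\nabla Q$ produces the Nehari identity $\|\nabla Q\|_{L^2}^2+\|Q\|_{L^2}^2=\|\,|x|^{-b}Q^{\alpha+2}\|_{L^1}$ together with a Pohozaev identity; solving this $2\times 2$ linear system expresses $\|Q\|_{L^2}^2$ and $\|\,|x|^{-b}Q^{\alpha+2}\|_{L^1}$---and hence $C_{GN}$ and $E[Q]$---as explicit positive multiples of $\|\nabla Q\|_{L^2}^2$. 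In particular one gets $E[Q]=\tfrac{N\alpha+2b-4}{2(N\alpha+2b)}\|\nabla Q\|_{L^2}^2>0$ and $\tfrac{C_{GN}}{\alpha+2}\,Y(Q)^{\alpha/2}=\tfrac{2}{N\alpha+2b}$, where throughout I write $Y(w):=\|w\|_{L^2}^{2(1-s_c)}\|\nabla w\|_{L^2}^{2s_c}$ for the scaling-invariant quantity, so that the second condition in \eqref{coercivity-assumption} becomes $Y(v)\le Y(Q)$ after squaring.

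Inserting the Gagliardo--Nirenberg inequality into the energy and using $s_c=\tfrac N2-\tfrac{2-b}{\alpha}$ to collapse exponents (one checks $\|\nabla v\|_{L^2}^{\frac{N\alpha+2b}{2}-2}\|v\|_{L^2}^{\frac{4-2b-\alpha(N-2)}{2}}=Y(v)^{\alpha/2}$) gives $E[v]\ge\|\nabla v\|_{L^2}^2\bigl(\tfrac12-\tfrac{C_{GN}}{\alpha+2}Y(v)^{\alpha/2}\bigr)$; multiplying the energy inequality instead by $M[v]^{(1-s_c)/s_c}$ and again simplifying yields $M[v]^{(1-s_c)/s_c}E[v]\ge f(Y(v))$ with $f(y):=\tfrac12 y^{1/s_c}-\tfrac{C_{GN}}{\alpha+2}y^{\beta/2}$ for an explicit exponent $\beta$ satisfying $\beta/2-1/s_c=\tfrac\alpha2>0$. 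Thus $f(0)=0$, $f$ is strictly increasing on an interval $(0,y_*)$ and strictly decreasing afterward, and the Pohozaev bookkeeping identifies the critical point as $y_*=Y(Q)$ while (using that equality holds in Gagliardo--Nirenberg for $v=Q$) $f(Y(Q))^{s_c}=M[Q]^{1-s_c}E[Q]^{s_c}$. Since $f\ge0$ on $[0,Y(Q)]$, on that range $E[v]\ge0$ and we may raise to the $s_c$-th power to get $M[v]^{1-s_c}E[v]^{s_c}\ge f(Y(v))^{s_c}$. Feeding in the two hypotheses gives $f(Y(v))<f(Y(Q))$ with $Y(v)\le Y(Q)$, so by continuity and strict monotonicity of $f$ on $[0,Y(Q)]$ we get $Y(v)\le Y(Q)-\eta$ for some $\eta>0$ controlled by the energy gap; taking square roots proves (ii).

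For (i) one combines the trivial bound $E[v]\le\tfrac12\|\nabla v\|_{L^2}^2$ with the factored Gagliardo--Nirenberg bound above: since $Y(v)\le Y(Q)$ and $\tfrac{C_{GN}}{\alpha+2}Y(Q)^{\alpha/2}=\tfrac{2}{N\alpha+2b}<\tfrac12$, we obtain $E[v]\ge\tfrac{N\alpha+2b-4}{2(N\alpha+2b)}\|\nabla v\|_{L^2}^2$. For (iii), the same factorization gives $8\|\nabla v\|_{L^2}^2-\tfrac{4(N\alpha+2b)}{\alpha+2}\|\,|x|^{-b}|v|^{\alpha+2}\|_{L^1}\ge 8\|\nabla v\|_{L^2}^2\bigl(1-\tfrac{N\alpha+2b}{2}\cdot\tfrac{C_{GN}}{\alpha+2}Y(v)^{\alpha/2}\bigr)$; since $\tfrac{N\alpha+2b}{2}\cdot\tfrac{C_{GN}}{\alpha+2}Y(Q)^{\alpha/2}=1$, the strict improvement $Y(v)\le(1-\delta_0)^2Y(Q)$ from part (ii) makes the parenthetical factor bounded below by a positive constant, which is (iii). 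I expect the main obstacle to be the first paragraph: one must set up the Pohozaev identity correctly for the inhomogeneous nonlinearity (the weight $|x|^{-b}$ changes the numerology compared with the pure-power case) and then carry out the algebra that pins down the constants and identifies $Y(Q)$ as the maximizer of $f$; granting this, parts (i)--(iii) are elementary calculus.
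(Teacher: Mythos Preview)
Your argument is correct and is essentially the standard proof of this coercivity lemma. The paper itself does not supply a proof here; it simply states the result and refers to \cite[Lemma~4.2]{FG2}, remarking that it follows from the sharp constant in the Gagliardo--Nirenberg inequality. Your write-up fills in exactly that: the Pohozaev/Nehari identities to compute $C_{GN}$, $E[Q]$, and the relation $\tfrac{C_{GN}}{\alpha+2}Y(Q)^{\alpha/2}=\tfrac{2}{N\alpha+2b}$, then the one-variable analysis of $f(y)=\tfrac12 y^{1/s_c}-\tfrac{C_{GN}}{\alpha+2}y^{1/s_c+\alpha/2}$. I checked your numerology (in particular that $\beta s_c=\tfrac{N\alpha+2b}{2}$, so the critical point of $f$ is indeed $Y(Q)$, and that the virial parenthetical equals $1$ at $Y(Q)$), and it is all consistent. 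One cosmetic point: the quantitative $\delta$ in (ii) and (iii) depends on the size of the gap $M[Q]^{1-s_c}E[Q]^{s_c}-M[v]^{1-s_c}E[v]^{s_c}$, which is how the lemma is used downstream; you note this, but it is worth making explicit since the lemma as stated only asserts existence of some $\delta>0$.
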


In particular, item (i) shows the coercivity of the energy under the sub-threshold assumption, while (ii) shows a quantitative improvement to the second assumption in \eqref{coercivity-assumption}.  Finally, (iii) shows that below the ground state the quantity appearing in the virial computation is quantitatively positive (see below).

\subsection{Virial identities}\label{S:virial} In this section, we recall the virial identity obtained in \cite[Proposition~7.2]{FG2}. In particular, for a solution $u$ to \eqref{INLS}, we define
\[
z_R(t) = \int_{\R^N} R^2\phi(\tfrac{x}{R})|u(t,x)|^2\,dx,
\]
where $\phi$ is a smooth function.  Then 
\[
z_R'(t) =2R \Im \int_{\R^N} \nabla \phi(\tfrac{x}{R})\cdot\nabla u(x) \bar u(x)\,dx,
\]
and
\begin{align*}
z_R''(t)& = 4\Re\int \partial_k u \partial_j \bar u \partial_{jk}\phi(\tfrac{x}{R}) - \tfrac1{R^2} |u|^2 (\Delta^2\phi)(\tfrac{x}{R})\,dx \\
& \quad -\tfrac{2\alpha}{\alpha+2}\int |x|^{-b}|u|^{\alpha+2}(\Delta \phi)(\tfrac{x}{R}) + \tfrac{4R}{\alpha+2} \nabla(|x|^{-b})\cdot\nabla\phi(\tfrac{x}{R})|u|^{\alpha+2}\,dx,
\end{align*}
where repeated indices are summed.

The standard virial identity corresponds to choosing $\phi(x)=|x|^2$, in which case the second derivative reduces to the quantity appearing in Lemma~\ref{L:coercive}(iii).  In particular, this term has a good sign for solutions below the ground state.  However, in this case, the virial quantity $z_R'$ cannot be controlled uniformly in time.  For the localized virial argument, one takes $\phi$ to be a smooth, compactly supported supported function equal to $|x|^2$ for $|x|\leq 1$ and equal to zero for $|x|>2$.  Then in the second derivative term, one still obtains the quantity in Lemma~\ref{L:coercive}(iii), up to errors that are controlled by
\begin{equation}\label{errors}
\int_{|x|>R} |\nabla u(t,x)|^2 + \tfrac{1}{R^2}|u(t,x)|^2+\tfrac{1}{R^b}|u(t,x)|^{\alpha+2}\,dx. 
\end{equation}
Such error terms can be controlled uniformly in time for solutions with pre-compact orbit in $H^1$.  With this localization, one also has the bound
\begin{equation}\label{zRbd}
|z_R'(t)| \lesssim R\|u(t)\|_{H^1}^2,
\end{equation}
which (for fixed $R$) has the potential to be bounded uniformly in $t$. 
\subsection{Concentration compactness}\label{S:CC}

An essential ingredient in the construction of minimal blowup solutions is the following linear profile decomposition for $H^1$-bounded sequences adapted to the Strichartz estimates discussed above.  The version we need can be found, for example, in \cite[Theorem 5.1]{CFX} (see also \cite[Theorem 1.10]{Shao}). 

\begin{proposition}[Linear profile decomposition]\label{profile}
Let $\{\phi_n\}$ be a bounded sequence in $H^1(\R^N)$.  There exists $M^*\in\mathbb{N}\cup\{\infty\}$ so that for each $1 \leq j \leq M\leq M^*$ there exists a profile $\psi^j$ in $H^1(\R^N)$, a sequence $t^j_n$ of time shifts, a sequence $x^j_n$ of space shifts, and a sequence $W_n^M(x)$ of remainders in $H^1(\R^N)$, such that
\begin{align}
\phi_n(x)=\sum_{j=1}^{M}e^{-it^j_n\Delta}\psi^j(x-x^j_n)+W^{M}_n(x)
\end{align}
along a subsequence, with the following properties:
\begin{itemize}
\item Asymptotic orthogonality of the parameters: for $1\leq k\neq j\leq M$,
\begin{align}\label{orthogtx}
\lim_{n\to+\infty}|t^j_n-t^k_n|+|x^j_n-x^k_n|=\infty.
\end{align}
\item Asymptotic vanishing for the remainders: 
\begin{align}\label{restzero}
\lim_{M\to+\infty}\left(\lim_{n\to +\infty}\|e^{it\Delta}W^M_n\|_{S(\dot H^{s_c})}\right)=0.
\end{align}
\item Asymptotic mass/energy decoupling: for $M\in \mathbb{N}$ and any $s\in [0,1]$, we have
\begin{align}\label{pythagoreanexpansion}
\|\phi_n\|_{\dot H^s}^2=\sum_{j=1}^{M}\|\psi^j\|_{\dot H^s}^2+\|W^M_n\|_{\dot H^s}^2+o_n(1) \qtq{as}n\to\infty.
\end{align}
\end{itemize}
Finally, we may assume either $t_n^j\equiv 0$ or $t_n^j\to\pm\infty$, and either $x_n^j\equiv 0$ or $|x_n^j|\to\infty$.
\end{proposition}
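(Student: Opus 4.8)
The plan is to prove Proposition~\ref{profile} by the standard iterative extraction scheme of Bahouri--G\'erard and Keraani (see \cite{CFX, Shao} for versions essentially identical to the one needed here), whose engine is an \emph{inverse Strichartz inequality}. Set $W_n^0=\phi_n$ and proceed inductively. Having produced $W_n^{j-1}\in H^1$ with $\sup_n\|W_n^{j-1}\|_{H^1}\lesssim 1$, pass to a subsequence along which $\varepsilon_j:=\lim_n\|e^{it\Delta}W_n^{j-1}\|_{S(\dot H^{s_c})}$ exists; if $\varepsilon_j=0$ we stop and set $M^*=j-1$, and otherwise we peel off a profile (so the process either terminates or runs indefinitely with $M^*=\infty$).

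The heart of the matter is the inverse Strichartz inequality: if $\{f_n\}$ is bounded in $H^1$ and $\|e^{it\Delta}f_n\|_{S(\dot H^{s_c})}\gtrsim\varepsilon$, then after passing to a subsequence there are parameters $(t_n,x_n)\in\R\times\R^N$ and a nonzero $\psi\in H^1$ such that $e^{it_n\Delta}f_n(\cdot+x_n)\rightharpoonup\psi$ weakly in $H^1$, with a quantitative bound $\|\psi\|_{\dot H^{s_c}}\gtrsim\varepsilon^{\beta}$ for some $\beta=\beta(N,s_c)>0$. This is obtained by fixing a single $\dot H^{s_c}$-admissible pair $(q,r)$, combining the Strichartz estimate above with a refinement --- a bound for $\|e^{it\Delta}f\|_{L^q_tL^r_x}$ by a geometric mean of $\|f\|_{\dot H^{s_c}}$ against a weaker negative-regularity (Besov-type) norm, proved via a Littlewood--Paley decomposition and Bernstein --- which locates a dyadic frequency scale $N_n$ of $f_n$ and a physical location $x_n$ (and time $t_n$) at which $f_n$ carries substantial mass; one then extracts $\psi$ via local compactness of the embedding of bounded sets of $H^1$ into $L^2_{\mathrm{loc}}$. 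The point specific to the $H^1$-bounded setting is that no scaling parameter is needed: since $\|f_n\|_{\dot H^1}\lesssim 1$ forces $N_n$ bounded above and $\|f_n\|_{L^2}\lesssim 1$ forces $N_n$ bounded below (both because $0<s_c<1$), the relevant frequencies stay $\sim 1$, so the extracted bubble lives, up to translation, in a fixed copy of $H^1$, and high- and low-frequency parts carry asymptotically no $S(\dot H^{s_c})$-mass (again by Bernstein) and are simply retained in the remainder. Applying this to $W_n^{j-1}$ produces $(t_n^j,x_n^j)$ and $\psi^j\in H^1$ with $e^{it_n^j\Delta}W_n^{j-1}(\cdot+x_n^j)\rightharpoonup\psi^j$ in $H^1$, $\|\psi^j\|_{\dot H^{s_c}}\gtrsim\varepsilon_j^{\beta}$, and $\|\psi^j\|_{H^1}\lesssim 1$ by weak lower semicontinuity.

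Setting $W_n^j=W_n^{j-1}-e^{-it_n^j\Delta}\psi^j(\cdot-x_n^j)$ keeps $\sup_n\|W_n^j\|_{H^1}\lesssim 1$ and gives $e^{it_n^j\Delta}W_n^j(\cdot+x_n^j)\rightharpoonup 0$. Expanding $\|W_n^{j-1}\|_{\dot H^s}^2$ using this weak convergence and iterating yields, for every $s\in[0,1]$, the expansion \eqref{pythagoreanexpansion}; taking $s=s_c$ (valid since $0<s_c<1$, by interpolation and the $H^1$-bound) gives $\sum_j\|\psi^j\|_{\dot H^{s_c}}^2\le\limsup_n\|\phi_n\|_{\dot H^{s_c}}^2<\infty$, hence $\|\psi^j\|_{\dot H^{s_c}}\to 0$, and comparison with $\|\psi^j\|_{\dot H^{s_c}}\gtrsim\varepsilon_j^{\beta}$ forces $\varepsilon_j\to 0$ --- which is precisely \eqref{restzero}. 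The asymptotic orthogonality \eqref{orthogtx} is propagated by the induction: if for some $k<j$ the parameters $(t_n^k,x_n^k)$ and $(t_n^j,x_n^j)$ failed to separate, then expressing $W_n^{j-1}$ through $W_n^k$ and the intermediate profiles, changing variables in $e^{it_n^j\Delta}W_n^{j-1}(\cdot+x_n^j)\rightharpoonup\psi^j$, and using $e^{it_n^k\Delta}W_n^k(\cdot+x_n^k)\rightharpoonup 0$ together with the already-established orthogonality of $(t_n^l,x_n^l)$, $k<l<j$, from $(t_n^j,x_n^j)$, one finds $\psi^j=0$, a contradiction. Finally, one passes to a diagonal subsequence in $j$ and normalizes: whenever $t_n^j$ converges, replace $\psi^j$ by $\lim_n e^{-it_n^j\Delta}\psi^j$ (a limit in $H^1$ by strong continuity of the free flow) and take $t_n^j\equiv 0$, otherwise $|t_n^j|\to\infty$; whenever $|x_n^j|$ remains bounded, absorb the convergent translation into $\psi^j$ and take $x_n^j\equiv 0$, otherwise $|x_n^j|\to\infty$.

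The main obstacle is the inverse Strichartz inequality, and in particular the verification --- via Bernstein and the uniform $H^1$-bound --- that high- and low-frequency bubbles carry asymptotically vanishing $S(\dot H^{s_c})$-norm, so that no scaling parameter appears and the profiles genuinely live, up to space translation, in a fixed copy of $H^1$. The remaining steps (the bookkeeping of the $(M,n)$ double limit and the diagonal subsequence, the orthogonality manipulations behind \eqref{orthogtx}, and the Pythagorean expansions \eqref{pythagoreanexpansion}) are routine, and the statement is in any case available essentially verbatim in \cite{CFX, Shao}.
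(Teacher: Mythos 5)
Your proposal is correct and reproduces the standard Bahouri--G\'erard/Keraani iterative extraction via an inverse Strichartz inequality, which is exactly the proof given in the references \cite{CFX, Shao} that the paper itself cites for this proposition (the paper does not reprove it). Your observations that the $H^1$-bound pins the frequency scale to $\sim 1$ (so no rescaling parameter is needed) and that the decoupling \eqref{pythagoreanexpansion} follows from weak convergence together with the unitarity of $e^{it\Delta}$ and translations on $\dot H^s$ are precisely the key points, and the bookkeeping you describe matches the cited sources.
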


\section{Construction of minimal blowup solutions}\label{S:exist}

In this section, we show that if Theorem~\ref{T} fails, we may find a minimal blowup solution below the ground state threshold that obeys strong compactness properties.  A similar result was obtained in \cite{FG2} for radial solutions and under more stringent restrictions on the parameters $(N,b,\alpha)$, while the non-radial case was addressed in \cite{MMZ} for the $3d$ cubic case. The key ingredients are the linear profile decomposition (Proposition~\ref{profile}), stability theory (Proposition~\ref{stability}), and the construction of scattering solutions living far from the origin (Proposition~\ref{P:embed} below). In the next result we construct the minimal blowup solution. In particular, we extend \cite[Proposition 6.1]{FG2} to the non-radial setting for all the intercritical regime.

\begin{proposition}[Existence of a minimal blowup solution]\label{P:exist} Suppose Theorem~\ref{T} fails for some choice of $(N,b,\alpha)$ as in the statement of the theorem.  Then there exists a function $u_{c,0}\in H^1$ so that the corresponding solution $u_c$ to \eqref{INLS} is global, uniformly bounded in $H^1$, and obeys the following: 
\begin{itemize}
\item [(i)] $M[u_c]=1$,
\item [(ii)] $E[u_c]<E[Q]$,
\item [(iii)] $\|  \nabla u_{c,0} \|_{L^2}^{s_c} \|u_{c,0}\|_{L^2}^{1-s_c}<\|\nabla Q \|_{L^2}^{s_c} \|Q\|_{L^2}^{1-s_c}$,
\item [(iv)] $\|u_{c}\|_{S(\dot{H}^{s_c};\mathbb{R}_+)}=\|u_{c}\|_{S(\dot{H}^{s_c};\mathbb{R}_-)}=+\infty$.
\end{itemize}
Furthermore, the orbit $\{u_c(t):t\in\R\}$  is pre-compact in $H^1$. 
\end{proposition}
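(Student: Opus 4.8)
The plan is to run the standard Kenig--Merle concentration-compactness argument. First I would define the critical threshold by setting
\[
\mathcal{E}_c = \sup\bigl\{ E[u] : u \text{ solves \eqref{INLS}}, \ M[u]=1, \ \|\nabla u(t)\|_{L^2}^{s_c}\|u(t)\|_{L^2}^{1-s_c} < \|\nabla Q\|_{L^2}^{s_c}\|Q\|_{L^2}^{1-s_c}, \ \|u\|_{S(\dot H^{s_c})}<\infty \bigr\},
\]
after rescaling via the mass-energy scaling symmetry so that $M[u]=1$; since small-data theory (Proposition~\ref{P:LWP}) gives scattering for small data, $\mathcal{E}_c>0$, and the assumption that Theorem~\ref{T} fails forces $\mathcal{E}_c < E[Q]$. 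I would then take a sequence of solutions $u_n$ with $M[u_n]=1$, with the sub-threshold gradient bound, and with $E[u_n]\searrow \mathcal{E}_c$ but $\|u_n\|_{S(\dot H^{s_c})}\to\infty$, and extract a minimal element.

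The heart of the argument is to apply the linear profile decomposition (Proposition~\ref{profile}) to the sequence $u_n(0)$ (which is bounded in $H^1$ by coercivity, Lemma~\ref{L:coercive}(i)), obtaining profiles $\psi^j$ with time shifts $t_n^j$ and space shifts $x_n^j$. Using the mass/energy decoupling \eqref{pythagoreanexpansion} together with the variational estimates of Lemma~\ref{L:coercive}, each profile individually satisfies the sub-threshold constraints, and the mass and energy split as a sum over profiles plus the remainder. To each profile I associate a nonlinear solution: if $t_n^j\to\pm\infty$ one uses the wave-operator construction (Proposition~\ref{P:LWP}(iii)); if $t_n^j\equiv 0$ one solves with data $\psi^j$ (or its translate). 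The crucial case distinction is on $x_n^j$: if $x_n^j\equiv 0$ the nonlinear profile is a fixed sub-threshold solution, whereas if $|x_n^j|\to\infty$ the nonlinear profile is constructed by Proposition~\ref{P:embed} and is in fact a \emph{scattering} solution with finite $S(\dot H^{s_c})$-norm, since far from the origin the weight $|x|^{-b}$ is small. One then argues by contradiction: if more than one profile is nontrivial, or if the single nontrivial profile is strictly sub-threshold in energy, then by the stability proposition (Proposition~\ref{stability}) each nonlinear profile scatters, and approximating $u_n$ by the superposition of these nonlinear profiles (controlling cross terms via the orthogonality \eqref{orthogtx} and the decay \eqref{restzero} of the remainder) would force $\|u_n\|_{S(\dot H^{s_c})}<\infty$ for large $n$, a contradiction. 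Hence exactly one profile survives, $x_n^j\equiv 0$, and it realizes the infimum: $M[u_{c,0}]=1$, $E[u_{c,0}]=\mathcal{E}_c<E[Q]$, the sub-threshold gradient bound holds, and the remainder vanishes in $H^1$. This gives (i)--(iii); replacing the solution by a forward/backward translate in time if necessary yields (iv).

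For the compactness of the orbit, I would run the same profile-decomposition machinery at a general sequence of times: given $t_n\in\R$, apply Proposition~\ref{profile} to $u_c(t_n)$; by the minimality of $\mathcal{E}_c$ and the argument above, exactly one profile can be nontrivial with $x_n\equiv 0$ and the remainder must vanish, giving that $\{u_c(t_n, \cdot - y_n)\}$ converges in $H^1$ along a subsequence for some translation parameters $y_n$. The point that $x_n\equiv 0$ for the surviving profile --- so that the translation parameter $x(t)$ of the compact solution can be taken to be $0$ --- is exactly the payoff of Proposition~\ref{P:embed}: a profile with $|x_n|\to\infty$ would be a scattering solution and hence cannot appear in a minimal blowup solution. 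Upgrading $H^1_{\mathrm{loc}}$-type compactness (modulo translations) to genuine pre-compactness of the full orbit in $H^1$ then follows from the uniform $H^1$ bound plus the observation, via the concentration-compactness dichotomy and conservation of mass, that no mass can escape to spatial infinity. The main obstacle I anticipate is the nonlinear profile construction in the $|x_n|\to\infty$ regime and the careful bookkeeping needed to see that such profiles are always scattering --- but this is precisely what Proposition~\ref{P:embed} is designed to supply, so in the present proposition it can be invoked as a black box; the remaining difficulty is the routine but lengthy verification that the superposition of nonlinear profiles is a good approximate solution in the sense required by Proposition~\ref{stability}.
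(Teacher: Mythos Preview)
Your approach is the same as the paper's, and the overall strategy is correct. There is, however, one step that does not work as you have stated it: the derivation of (iv). You construct $u_c$ with $\|u_c\|_{S(\dot H^{s_c})}=\infty$ and then claim that ``replacing the solution by a forward/backward translate in time if necessary yields (iv).'' But a time translate of $u_c$ has the same forward and backward scattering norms up to relabeling of the time origin; if $u_c$ has finite norm on $\R_-$, so does every translate. The paper handles this at the level of the minimizing sequence rather than the limit: one takes $u_n$ satisfying $\|u_n\|_{S(\dot H^{s_c};\R_+)}\to\infty$ \emph{and} $\|u_n\|_{S(\dot H^{s_c};\R_-)}\to\infty$ simultaneously (which can be arranged by time-translating each $u_n$ before extracting), so that (iv) for $u_c$ then follows from the stability result. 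You should set things up the same way.

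Your final paragraph on pre-compactness also hedges unnecessarily. Once the profile argument applied to $u_c(t_n)$ yields a single profile with $x_n\equiv 0$, $t_n\equiv 0$, and remainder $\to 0$ in $H^1$, you have $u_c(t_n)\to\psi$ in $H^1$ along a subsequence directly; there is no ``modulo translations'' compactness to upgrade and no separate concentration-compactness dichotomy to invoke. This is exactly how the paper proceeds: it simply reruns the same four items (a)--(d) on the sequence $u_c(t_n)$ in place of $u_{n,0}$.
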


\begin{proof}
For each $\delta > 0$, we define the set $A_\delta$ of all $u_0\in H^1(\mathbb{R}^N)$ satisfying
\[
E[u_0]^{s_c}M[u_0]^{1-s_c}<\delta\qtq{and}\|\nabla u_0\|^{s_c}_{L^2}\| u_0\|^{1-s_c}_{L^2}<\|\nabla Q\|^{s_c}_{L^2}\| Q\|^{1-s_c}_{L^2}.
\]
We define $\delta_c$ to be the supremum of $\delta>0$ such that data in $A_\delta$ leads to global scattering solutions to \eqref{INLS} with finite space-time norm.  The assumption that Theorem~\ref{T} fails for a choice of $(N,b,\alpha)$ is then equivalent to the statement that $\delta_c<M[Q]^{1-s_c}E[Q]^{s_c}.$

We may now take a sequence of solutions $u_n$ to \eqref{INLS} with $H^1$ initial data $u_{n,0}$ obeying the following: 
\begin{align}
&\|u_{n,0}\|_{L^2} \equiv 1,\label{mass=1} \\
& \|\nabla u_{n,0}\|^{s_c}_{L^2} <  \|\nabla Q\|^{s_c}_{L^2}\|Q\|^{1-s_c}_{L^2},\label{PCS1}   \\
& E[u_n] \to \delta_c^{\frac{1}{s_c}}\qtq{as} n \rightarrow \infty, \label{PCS2} \\
&\lim_{n\to\infty} \|u_n\|_{S(\dot H^{s_c};\R_+)}  = \lim_{n\to\infty} \|u_n\|_{S(\dot H^{s_c};\R_-)} = \infty. \label{un}
\end{align}

We will now apply the linear profile decomposition (Proposition~\ref{profile}) to the sequence $u_{n,0}$ and establish the following facts: 
\begin{itemize}
\item[(a)] there is a single profile $\psi$ present in the decomposition; 
\item[(b)] the time shifts obey $t_n\equiv 0$; 
\item[(c)] the space shifts obey $x_n\equiv 0$; and 
\item[(d)] the error $W_n$ converges to zero strongly in $H^1$.
\end{itemize}

In particular, items (a)--(d) imply that the sequence $u_{n,0}$ converges strongly to some limit $u_{c,0}$ (after passage to an appropriate subsequence). The solution $u_c$ to \eqref{INLS} with initial data $u_{c,0}$ will then obey all of the conditions appearing in Proposition~\ref{P:exist}.  Indeed, items (i)---(iii) follow from the strong $H^1$ convergence, while (iv) follows from the stability result (Proposition~\ref{stability}).  To establish the pre-compactness of the orbit of $u_c$, one must prove convergence (along a subsequence) for $\{u_c(t_n)\}$ for an arbitrary sequence of times $t_n$.  To this end, one simply repeats arguments of the present proof to the sequence $u_c(t_n)$ (in place of the sequence $u_{n,0})$; indeed, this sequence and obeys \eqref{mass=1}, \eqref{PCS1}, \eqref{PCS2}, and \eqref{un} above. 

It therefore remains to establish items (a)--(d).  We apply Proposition~\ref{profile} to obtain 
 \begin{equation}\label{PCS3}
 u_{n,0}(x)=\sum_{j=1}^{M}e^{-it_n^j\Delta} \psi^j(x-x_n^j)+W_n^M(x),
 \end{equation} 
where the shifts, profiles, and remainders obey all of the conditions stated in Proposition~\ref{profile}.

Before proceeding to the proof of items (a)--(d), let us collect a few facts about the profiles appearing in the decomposition above.  First, using the Pythagorean expansion \eqref{pythagoreanexpansion}, we can obtain 
\begin{align}
\sum_{j=1}^{M}\|\psi^j\|^2_{L^2}+\limsup_{n\to\infty}\|W_n^M\|_{L^2}^2\leq 1, \label{PCS41}
\end{align}
for all $M$.  Similarly, using Lemma~\ref{L:coercive} as well, we can deduce that 
\[
E[e^{-it_n^j\Delta}\psi^j]\geq 0 \qtq{and} E[W_n^M]\geq 0,
\]
for all $j$ and all $n$ large, along with the fact that
\begin{equation}\label{PCS8}
\limsup_{n\rightarrow+\infty}\biggl[\sum_{j=1}^{M}E[e^{-it_n^j\Delta}\psi^j]+E[W_n^M]\biggr]=\delta_c^{\frac{1}{s_c}}.
\end{equation}
Moreover we have that for both the profiles and the remainder, the energy is comparable to the square of the $\dot H^1$-norm (see Lemma~\ref{L:coercive}).

\textbf{Item (a).} We turn to item (a) and suppose towards a contradiction that more than one profile appears in the decomposition above.  (Note that there must be at least one profile, for otherwise an application of the stability result Proposition~\ref{stability} with the approximate solution $e^{it\Delta}W_n^M$ would imply global space-time bounds for the solutions $u_n$, contradicting \eqref{un}.)  In this case, we can construct scattering solutions to \eqref{INLS} corresponding to each profile, as we now explain.

First, if $x_n^j\equiv 0$ and $t_n^j\equiv 0$, then we take $v^j$ to be the solution to \eqref{INLS} with initial data $\psi^j$.  This solution scatters due to the fact that it is below the critical scattering threshold (see \eqref{PCS3} and \eqref{PCS8}). If instead $x_n^j\equiv 0$ and $t_n^j\to\pm\infty$, we take $v^j$ to be the solution that scatters to $e^{it\Delta}\psi^j$ as $t\to\pm\infty$ (cf. Proposition~\ref{P:LWP}(iii)). In both cases we then set $v_n^j(t,x)=v^j(t+t_n^j,x)$.  Finally, if $|x_n^j|\to\infty$, then we must appeal to Proposition~\ref{P:embed}, proved below, to construct a global scattering solution $v_n^j$ to \eqref{INLS} obeying $v_n^j(0,x)=e^{-it_n^j\Delta}\psi(x-x_n^j)$. 

We now define a sequence of approximate solutions to \eqref{INLS} via
\[
\tilde u_n^M = \sum_{j=1}^M v_n^j.
\]
We now claim the following: 
\begin{itemize}
\item[1.] We have asymptotic agreement of the initial data:
\begin{equation}\label{CSR2}
 \limsup_{M\rightarrow \infty} \left[\limsup_{n\rightarrow \infty}  \|e^{it\Delta}[u_n(0)-\tilde{u}_{n}^M(0)]\|_{S(\dot{H}^{s_c})}\right]=0
\end{equation}
\item[2.] The functions $\tilde u_n^M$ obey uniform space-time bounds: 
There exist $L>0$ and $S>0$ independent of $M$ such that for any $M$, there exists $n_1=n_1(M)$ such that
 \begin{equation}\label{claim1}
 n>n_1\implies   \|\tilde{u}_n^M\|_{S(\dot{H}^{s_c})}\leq L\qtq{and}\|\tilde{u}_n^M\|_{L^\infty_tH^1_x}\leq S.
 \end{equation}
\item[3.] The functions $\tilde u_n^M$ are good approximate solutions to \eqref{INLS}.  That is, defining $f(z)=|z|^\alpha z$ and 
\begin{align*}
e_n^M & = (i\partial_t+\Delta)\tilde u_n^M + |x|^{-b}f(\tilde u_n^M) = |x|^{-b}\biggl[ f\bigl( \sum_{j=1}^M v_n^j\bigr) - \sum_{j=1}^M f(v_n^j) \biggr],
\end{align*}
we have the following: for each $M$ and $\varepsilon>0$, there exists $n_0=n_0(M,\varepsilon)$ such that
 \begin{equation}\label{claim2}
n>n_0\implies   \|e_n^M\|_{S'(\dot{H}^{-s_c})}+\|e_n^M\|_{S'(L^2)}+\|\nabla e_n^M\|_{S'(L^2)}\leq\varepsilon.
\end{equation}
\end{itemize}
Once we have established these three claims, we may apply the stability result (Proposition~\ref{stability}) to deduce that the solutions $u_n$ inherit the space-time bounds of the approximate solutions $\tilde u_n^M$, thus leading to a contradiction to \eqref{un} and completing the proof of item (a). 

For the third claim, it is enough to quote \cite[Proposition~6.1]{FG2}.  The proof relies essentially on the orthogonality of the functions $v_n^j$ in the form of \eqref{orthogtx}, which is used in conjunction with approximation by $C_c^\infty(\R\times\R^3)$ functions.  In particular, one applies pointwise estimates to the difference appearing in $e_n^M$ (as well as to the gradient) and then utilizes both the orthogonality condition as well as the global space-time bounds obeyed by the individual $v_n^j$. 

We therefore turn to the first claim and prove \eqref{CSR2}. By construction, we have
\[
u_n(0,x)-\tilde u_n^M(0,x) = \sum_{j=1}^M \bigl[e^{-it_n^j\Delta}\psi^j(x-x_n^j) -v^j(t_n^j,x)\bigr] + W_n^M.
\]
In particular, the summands are either identically zero or converge to zero in $H^1$ as $n\to\infty$, while the free evolution of $W_n^M$ tends to zero in $S(\dot H^{s_c}$) as $n,M\to\infty$.  Thus, \eqref{CSR2} holds. 

We turn to the second claim and prove \eqref{claim1}.  We follow the usual approach (namely, we use the fact that each profile obeys global space-time bounds and exploit the orthogonality in \eqref{orthogtx} to sum); however, we will provide some detail here, as it is in this step that a careful choice of function spaces extends the range of parameters $(N,\alpha,b)$ compared to previous works on \eqref{INLS}. 

First, using \eqref{PCS3} and \eqref{PCS41} together with the small-data theory for \eqref{INLS} (see Proposition~\ref{P:LWP}), we may obtain
\begin{equation}\label{CS1}
\sum_{j\geq M_0} \bigl[ \|v_n^j\|_{S(L^2)}^2 + \| \nabla v_n^j\|_{S(L^2)}^2\bigr] \lesssim 1
\end{equation}
for some sufficiently large $M_0$ and for all large $n$. On the other hand, utilizing the orthogonality condition \eqref{orthogtx}, one can establish
\[
\sup_{t\in\R}\bigl| \langle v_n^j,v_n^k\rangle_{H^1} \bigr| \to 0 \qtq{as}n\to\infty
\]
for $j\neq k$  (see e.g. \cite[Corollary 4.4]{CFX}).  Using the above, we can establish 
 \begin{equation}\label{claim12}
 \sup_{t\in\mathbb{R}}\|\tilde{u}_n^M\|^2_{H^1_x}\leq S \qtq{for all} n>n_1(M)
\end{equation}
for some $S>0$ independent of $M$. 

We will next find space-time norms in which we can estimate $\tilde u_n^M$.  We treat separately the cases $N=2$ and $N\geq 3$.

First suppose $N\geq 3$.  We will estimate $\tilde u_n^M$ in the $\dot{H}^{s_c}$-admissible space $L_t^{\hat a}L_x^{\hat r}$, where we recall the pair $(\hat a,\hat r)$ from the proof of Lemma~\ref{L:NL} (see \eqref{PHsA1} and \eqref{PHsA2}).  To this end, we first define
\[
\hat{p}=\tfrac{2N(\alpha+2-\theta)}{N(\alpha+2-\theta)-4(1-s_c)}
\]
for some $0<\theta\ll 1$ to be specified more precisely below. One has that $(\hat{a},\hat{p})$ is $L^2$-admissible and $s_c=\frac{N}{\hat{p}}-\frac{N}{\hat{r}}$. Moreover, $\hat{p}<\frac{N}{s_c}$ for $\theta$ small enough and $2<\hat{p}<\frac{2N}{N-2}$. Hence, applying Sobolev embedding we get
\begin{equation}\label{estimate-unM}
\begin{aligned}
 \|&\tilde{u}_n^M\|^2_{L_t^{\hat{a}}L_x^{\hat{r}}} \\ 
 & \lesssim\biggl\|\biggl(\sum_{j=1}^M |\nabla|^{s_c} v_n^j \biggr)^2\biggr\|_{L_t^{\frac{\hat{a}}{2}}L_x^{\frac{\hat{p}}{2}}}\\
 &\lesssim\sum_{j=1}^{M_0} \left\| |\nabla|^{s_c} v_n^j \right\|^2_{L_t^{\hat{a}}L_x^{\hat{p}}}+\sum_{j=M_0}^{M} \left\| |\nabla|^{s_c} v_n^j \right\|^2_{L_t^{\hat{a}}L_x^{\hat{p}}}+\sum_{j\neq k} \left\| [|\nabla|^{s_c} v_n^j][ |\nabla|^{s_c}v_n^k] \right\|_{L_t^{\frac{\hat{a}}{2}}L_x^{\frac{\hat{p}}{2}}}\\
 \end{aligned}
 \end{equation}
The first term is bounded by some constant depending on $M_0$, where as the second term is bounded independent of $M$ in light of \eqref{CS1}.  On the other hand, using the orthogonality \eqref{orthogtx} and approximation by $C_c^\infty(\R\times\R^N)$ functions, we see that the final term on the right-hand side above tends to zero as $n\to\infty$.

Thus we conclude that $\tilde u_n^M$ obeys good bounds in the specific norm $L_t^{\hat{a}} L_x^{\hat r}$.  To extend to arbitrary $S(\dot H^{s_c})$-admissible spaces, we make use of Strichartz estimates, estimating as in Lemma~\ref{L:NL} and recalling that $\tilde u_n^M$ is an approximate solution to \eqref{INLS} with error term obeying the estimates appearing in \eqref{claim2}. 

We next consider the case $N=2$. We introduce the parameters
\begin{equation}\label{C2N21}
a=\tfrac{2\alpha(\alpha+1-\theta)}{2-b+\varepsilon},\quad\,r=\tfrac{2\alpha(\alpha+1-\theta)}{(2-b)(\alpha-\theta)-\varepsilon}, 
\end{equation}
as well as
\begin{equation}\label{C2N22}
\bar{a}=\tfrac{2\alpha}{2\alpha-(2-b)-\varepsilon}\quad\bar{r}=\tfrac{2\alpha}{\varepsilon},
\end{equation}
where $\theta\in (0,\alpha)$ and $\varepsilon>0$ are chosen sufficiently small so that $a>4$ (this is possible since $\alpha>2-b$ and $b<1$). A direct computation shows that $(a,r)$ is $\dot{H}^{s_c}$-admissible and $(\bar{a},\bar{r})$ is $\dot{H}^{-s_c}$ admissible.

By interpolation, we have
\[
\|\tilde{u}_n^M\|_{L^a_tL^r_x}\leq \|\tilde{u}_n^M\|^{1-\frac{4}{a}}_{L^\infty_tL^p_x}\|\tilde{u}_n^M\|^{\frac{4}{a}}_{L_{t,x}^4},
\]
where
\[
p=\tfrac{2\alpha(\alpha+1-\theta)-4(2-b+\varepsilon)}{(2-b)(\alpha-1-\theta)-2\varepsilon}.
\]
We first observe that we may control $\tilde u_n^M$ in $L_{t,x}^4$ by estimating as we did in \eqref{estimate-unM} above.  Next, we note that $p>2$ for small enough $\theta$ and $\varepsilon$, which follows from $\alpha>2-b$ and $b<1$.
Thus by the Sobolev embedding $H^1 \hookrightarrow L^p$, we obtain control over $\tilde u_n^M$ in $L_t^a L_x^r$.  

As before, to extend to arbitrary $\dot H^{s_c}$ admissible pairs we make use of Strichartz estimates and the fact that $\tilde u_n^M$ is an approximate solution to \eqref{INLS} with errors obeying \eqref{claim2}. In particular, we need a suitable nonlinear estimate, which we obtain as follows.  We let $r_1\in(\tfrac{1}{\theta},\infty)$ be a free parameter to be chosen below and define $\gamma$ so that
\[
\tfrac{1}{\bar{r}'} = \tfrac{1}{\gamma}+\tfrac{1}{r_1} + \tfrac{\alpha+1-\theta}{r},
\]
where $\bar r$ and $r$ are given by relation \eqref{C2N21} and \eqref{C2N22}.
We may then estimate as follows: for $A\in\{B,B^C\}$, we have by H\"older's inequality 
\begin{align*}
\left\|  |x|^{-b}  |\tilde{u}_n^M|^\alpha \tilde{u}_n^M \right\|_{L^{\bar{a}'}_tL_x^{\bar{r}'}(A)}
& \lesssim  \| |x|^{-b}\|_{L^\gamma(A)} \|\tilde{u}_n\|^{\theta}_{L_t^\infty L_x^{\theta r_1}}   \|\tilde{u}_n\|^{\alpha+1-\theta}_{L_t^{a}L_x^{r}},
\end{align*}
since $(\alpha+1-\theta)\bar{a}'=a$.
Using the scaling relations, we see that
\[
\tfrac{2}{\gamma}-b = \tfrac{\theta(2-b)}{\alpha}-\tfrac{2}{r_1}. 
\]
In particular, if $A=B$ we choose $r_1$ so that $\theta r_1 > \tfrac{2\alpha}{2-b}$, while if $A=B^c$ we choose $r_1$ so that $\theta r_1\in(2,\tfrac{2\alpha}{2-b})$. With this choice, we have (after another application of Sobolev embedding) suitable control over the quantity above, which completes the proof of \eqref{claim1} in the case $N=2$. 

With \eqref{claim1} in place, we have now completed the proof of item (a), that is, the existence of a single profile in the decomposition \eqref{PCS3}.  We now turn to items (b)--(d).

\textbf{Items (b)--(d).} We have reduced the decomposition \eqref{PCS3} to one of the form
\[
u_{n,0}(x)= e^{-it_n\Delta}\psi(x-x_n)+W_n(x).
\]
To see that the space shifts must obey $x_n\equiv 0$, we note that if $|x_n|\to\infty$ then Proposition~\ref{P:embed} yields global scattering solutions $v_n$ to \eqref{INLS} with $v_n(0)=e^{-it_n\Delta}\psi(x-x_n)$.  Applying the stability result (Proposition~\ref{stability}), this implies uniform space-time bounds for the solutions $u_n$, contradicting \eqref{un}. To see that the time shifts must obey $t_n\equiv 0$, we note that if $|t_n|\to\infty$ then the functions $v_n(t)= e^{i(t-t_n)\Delta}\psi$ define good approximate solutions obeying global space-time bounds for $n$ large.  In particular, an application of Proposition~\ref{stability} would again yield uniform space-time bounds for the $u_n$, contradicting \eqref{un}.  Finally, note that if $M[\psi]<1$ or $E[\psi]<\delta^{\frac{1}{s_c}}$, then by the criticality of $\delta_c$ and stability theory once again, we would have that the solutions $u_n$ obey uniform space-time bounds.  It follows that $W_n$ must converge to zero strongly in $H^1$.  This completes the proof of items (b)--(d) and hence the proof of Proposition~\ref{P:exist}. 
\end{proof}

It remains to show how to construct scattering solutions to \eqref{INLS} corresponding to profiles living far from the origin.  In particular, we will extend the result of \cite{MMZ} beyond the special case of the $3d$ cubic equation (and even for that case, we extend the range of the parameter $b$ from $b\in(0,\tfrac12)$ to $b\in(0,1)$).  Our improvement stems largely from the fact that we have found suitable spaces in which to carry out the nonlinear estimates (see Lemma~\ref{L:NL}).

\begin{proposition}\label{P:embed} Suppose $N\geq 2$, $0<b<\min\{\tfrac{N}{2},2\}$, and $\tfrac{4-2b}{N}<\alpha<\tfrac{4-2b}{N-2}$.  Let $\psi \in H^1(\mathbb{R}^N)$. Suppose that $t_n\equiv 0$ or $t_n\to\pm\infty$ and that $|x_n|\to\infty$. Then for all $n$ sufficiently large, there exists a global solution
$v_n$ to \eqref{INLS} with
\[
v_n(0)=\psi_n:=e^{it_n\Delta} \psi(x-x_n)
\]
that scatters in $H^1$ and obeys
\[
\|v_n\|_{S(\dot{H}^{s_c})}+\|v_n\|_{S(L^2)}+\|\nabla v_n\|_{S(L^2)}\leq C
\]
for some $C=C(\|\psi\|_{H^1})$. Moreover, for any $\varepsilon > 0$, there exists $K>0$ and $\phi \in C_c^\infty(\mathbb{R}\times \mathbb{R}^N)$ such that
\begin{equation}\label{aprox}
 \|v_n - \phi(\cdot+ t_n, \cdot- x_n)\|_{S(\dot{H}^{s_c})} < \varepsilon\;\;\textnormal{for}\;\; n \geq K.   
\end{equation}
\end{proposition}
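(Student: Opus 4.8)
The plan is to exploit the fact that as $|x_n|\to\infty$, the nonlinearity $|x|^{-b}|u|^\alpha u$ becomes negligible near the (translated) profile, so that the free evolution of $\psi_n$ is an approximate solution to \eqref{INLS}, and then invoke the stability result (Proposition~\ref{stability}) to upgrade it to a genuine nonlinear solution with the claimed bounds. First I would reduce to the case $t_n\equiv 0$: if $t_n\to\pm\infty$, then since $e^{it\Delta}$ is unitary on $H^1$ and $\psi\in H^1$ can be approximated by Schwartz data, the dispersive decay of $e^{it_n\Delta}\psi$ combined with monotone convergence shows $\|e^{it\Delta}\psi_n\|_{S(\dot H^{s_c})}\to 0$ on $[0,\infty)$ (or $(-\infty,0]$), so the linear solution itself already serves as the approximate solution with vanishing Strichartz norm and one applies Proposition~\ref{stability} with $\tilde u_n\equiv 0$ (or more precisely with error $e = -|x|^{-b}f(e^{it\Delta}\psi_n)$, estimated as below). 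So the main case is $t_n\equiv 0$, where the candidate approximate solution is $\tilde v_n(t,x) := e^{it\Delta}[\psi(\cdot - x_n)](x)$, which solves \eqref{INLS} with error $e_n = -|x|^{-b}f(\tilde v_n)$.

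The heart of the matter is to show $\|e_n\|_{S'(\dot H^{-s_c})} + \|e_n\|_{S'(L^2)} + \|\nabla e_n\|_{S'(L^2)} \to 0$ as $n\to\infty$. The key point is that $\tilde v_n$ is essentially supported (in an $L^p$-sense, up to small errors) near $x = x_n$, where $|x|^{-b} \sim |x_n|^{-b}\to 0$. To make this rigorous I would first approximate $\psi$ in $H^1$ by a compactly supported smooth function $\chi$, say $\|\psi - \chi\|_{H^1} < \eta$; by the nonlinear estimates of Lemma~\ref{L:NL} (controlling the difference $f(\tilde v_n) - f(e^{it\Delta}\chi(\cdot-x_n))$ via the pointwise bound $|f(z)-f(w)| \lesssim (|z|^\alpha + |w|^\alpha)|z-w|$ and Strichartz/Sobolev) the contribution of the tail $\psi - \chi$ to all three error norms is $O(\eta)$, uniformly in $n$. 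So it suffices to handle $w_n := e^{it\Delta}\chi(\cdot - x_n)$; since $e^{it\Delta}$ commutes with translations, $w_n(t,x) = (e^{it\Delta}\chi)(x - x_n)$, a fixed space-time function translated by $x_n$. For the main term $|x|^{-b}|w_n|^\alpha w_n$ one splits space-time into the region where $|x|^{-b}$ is small (i.e. $|x| \gtrsim |x_n|^{1/2}$, say, which for large $n$ contains a neighborhood of the bulk of $w_n$, so $|x|^{-b} \lesssim |x_n|^{-b/2} \to 0$ there and one simply pulls out this factor before running the Lemma~\ref{L:NL} estimates on $w_n$) and the complementary region $|x| \lesssim |x_n|^{1/2}$, where the weight $|x|^{-b}$ is locally integrable (against the relevant powers, exactly as in the proof of Lemma~\ref{L:NL} — this is why $b < \min\{N/2,2\}$) and $w_n$ is small there because the free evolution $e^{it\Delta}\chi$ has been transported a distance $|x_n| - |x_n|^{1/2} \to\infty$ away; here one uses that $e^{it\Delta}\chi \in L_t^q L_x^r$ for the relevant admissible pairs together with dominated convergence / the tightness of these norms, after noting that the portion of $e^{it\Delta}\chi$ living at spatial distance $\gtrsim |x_n|^{1/2}$ from the origin tends to zero in the required Strichartz norms. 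The same splitting handles $\nabla(|x|^{-b}|w_n|^\alpha w_n)$, using Hardy's inequality for the term where the gradient hits the weight, exactly in the spirit of part~(iii) of Lemma~\ref{L:NL}.

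With the error estimate in hand, Proposition~\ref{stability} (applied with $\tilde u = \tilde v_n$, $u_0 = \psi_n = \tilde v_n(0)$, so that $M' = 0$ and $\varepsilon_1$ depends only on $\|\psi\|_{H^1}$ and the fixed Strichartz bound $\|\tilde v_n\|_{S(\dot H^{s_c})} \lesssim \|\psi\|_{H^1}$) produces, for all $n$ large, a global solution $v_n$ with $v_n(0) = \psi_n$, obeying $\|v_n - \tilde v_n\|_{S(\dot H^{s_c})} \lesssim_{\|\psi\|_{H^1}} \varepsilon_n \to 0$ and $\|v_n\|_{S(\dot H^{s_c})} + \|v_n\|_{S(L^2)} + \|\nabla v_n\|_{S(L^2)} \lesssim_{\|\psi\|_{H^1}} 1$; finiteness of the $S(L^2)$ and $\dot H^1$-Strichartz norms then gives $H^1$-scattering by Proposition~\ref{P:LWP}(ii). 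Finally, for \eqref{aprox}, given $\varepsilon > 0$ pick $\phi \in C_c^\infty(\R\times\R^N)$ with $\|e^{it\Delta}\psi - \phi\|_{S(\dot H^{s_c})} < \varepsilon/2$ (dense since $e^{it\Delta}\psi \in S(\dot H^{s_c})$ and $C_c^\infty$ space-time functions are dense in every $L_t^q L_x^r$); then by translation invariance of the Strichartz norms, $\|\tilde v_n - \phi(\cdot, \cdot - x_n)\|_{S(\dot H^{s_c})} = \|e^{it\Delta}\psi - \phi\|_{S(\dot H^{s_c})} < \varepsilon/2$ in the case $t_n \equiv 0$ (with the obvious modification $\phi(\cdot + t_n, \cdot - x_n)$ when $t_n \to \pm\infty$, using that the relevant piece of $e^{it\Delta}\psi$ is carried off to $t = \pm\infty$), and combining with $\|v_n - \tilde v_n\|_{S(\dot H^{s_c})} < \varepsilon/2$ for $n$ large yields \eqref{aprox}. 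The main obstacle is the error estimate in the second-to-last paragraph: one must carefully organize the three-way interplay between the smallness of $|x|^{-b}$ near $x_n$, its local integrability near the origin, and the transport of the (compactly approximated) free solution away from the origin, while staying inside function spaces — precisely those of Lemma~\ref{L:NL} — in which the weighted nonlinear estimates and Hardy's inequality are valid; this is exactly where the full range $0 < b < \min\{N/2, 2\}$ (rather than the restricted ranges of earlier works) is used.
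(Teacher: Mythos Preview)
Your approach is essentially correct but follows a genuinely different route from the paper. The paper builds a more elaborate approximate solution: for $|t|\leq T$ it sets $\tilde v_{n,T}(t,x)=\chi_n(x-x_n)\,e^{it\Delta}P_{\leq|x_n|^\theta}\psi(x-x_n)$, where $\chi_n$ is a smooth cutoff supported in $\{|x+x_n|\gtrsim|x_n|\}$, and for $|t|>T$ it propagates freely from time $\pm T$. The cutoff forces $|x|^{-b}\lesssim|x_n|^{-b}$ on the support of $\tilde v_{n,T}$, so the nonlinear portion of the error is small by inspection; the price is a commutator $[\Delta,\chi_n]w_n$ in the error, which produces a $\Delta w_n$ term when one estimates $\nabla e_n$, and the Littlewood--Paley projection together with Bernstein's inequality is introduced precisely to absorb that extra derivative. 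The time cut at $T$ is then needed because the pointwise bound $|x|^{-b}\lesssim|x_n|^{-b}$ is paired with norms carrying a $T^{1/q'}$ factor on $\{|t|\leq T\}$; for $|t|>T$ one instead relies on monotone convergence of Strichartz norms. Your choice of the bare free evolution as approximate solution sidesteps all of this---no commutator, hence no frequency localization, and no time splitting---but in exchange you must show that the full nonlinearity is small on all of $\R\times\R^N$ directly via the two-region spatial decomposition, tracking in each H\"older estimate from Lemma~\ref{L:NL} which factor (the weight $\||x|^{-b}\|_{L^\gamma}$ with the ``exterior'' choice of $\gamma$, or a finite-$q$ Strichartz norm of the translated free solution) supplies the decay. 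Two minor points: your reduction to $t_n\equiv 0$ is not really a reduction (when $t_n\to+\infty$ the Strichartz norm of $e^{it\Delta}\psi_n$ is small only on one time half-line), but this is harmless since your spatial argument applies equally to $e^{i(t+t_n)\Delta}\psi(\cdot-x_n)$ regardless of $t_n$; and the preliminary approximation of $\psi$ by a $C_c^\infty$ function is unnecessary, since the tightness $\|e^{it\Delta}\psi\|_{L_t^qL_x^r(\{|y|>R\})}\to 0$ already holds for arbitrary $\psi\in H^1$ by dominated convergence.
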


\begin{proof} We begin by constructing approximate solutions to \eqref{INLS}. For each $n$, define $\chi_n$ to be a smooth
function satisfying
\[
\chi_n=\left\{\begin{array}{cl}
1 &\;\;\;|x+x_n|>|x_n|/2,\\
0 &\;\;\;|x+x_n|<|x_n|/4,\;\; 
\end{array}\right.
\]
with $\sup_{x}|\partial^\alpha\chi_n(x)|\lesssim |x_n|^{-|\alpha|}$ for all multi-indices $\alpha$. Note that $\chi_n(x) \rightarrow 1$ as $n \rightarrow \infty$ for each $x \in \mathbb{R}^N$. 

For $T > 0$, define
\[
\tilde{v}_{n,T}(t, x) =\begin{cases}
 \chi_n(x-x_n)e^{it\Delta}P_n\psi(x -x_n) &\;\;|t|\leq T,\\
 e^{i(t-T)\Delta}\left[\tilde{v}_{n,T}(T)\right]&\;\;t>T,\\
 e^{i(t+T)\Delta}\left[\tilde{v}_{n,T}(-T)\right]&\;\;t<-T,
\end{cases}
\]
where we have set $P_n = P_{\leq|x_n|^\theta}$ for some small $0 < \theta <1$ to be determined more precisely below.  The rationale for this particular design of approximate solution is explained in \cite[Remark~4.1]{MMZ}. 

We will show the existence of the solutions $v_n$ by applying stability result (Proposition~\ref{stability}). To this end, we must  verify the following claims.

\ {\bf Claim $1$.} 
\[
\limsup\limits_{T\rightarrow \infty}\limsup\limits_{n\rightarrow \infty} \|\tilde{v}_{n,T}\|_{L^\infty_t H^1_x} + \|\tilde{v}_{n,T}\|_{S(\dot{H}^{s_c})}\lesssim 1.
\]
We first  consider the case $|t|\leq T$. First, by Sobolev embedding,
\begin{align*}
\|\tilde{v}_{n,T}\|_{L^\infty_{[|t|\leq T]}H^1_x}& \lesssim \|e^{it\Delta}P_n\psi\|_{L^\infty_tL^2_x}+ \|\nabla \chi_n\|_{L_x^N}\|e^{it\Delta} P_n \psi\|_{L^\infty_tL_x^{\frac{2N}{N-2}}}\\
& \quad + \| \chi_n\|_{L_x^\infty}\|\nabla e^{it\Delta} P_n \psi\|_{L^\infty_tL^2_x} \\
& \lesssim \|\psi\|_{H^1},
\end{align*}
for all $n$.  On the other hand, using Strichartz estimates, we obtain
\[
\|\tilde{v}_{n,T}\|_{S(\dot{H}^{s_c})}\lesssim \|P_n \psi\|_{H^{s_c}}\lesssim \|\psi\|_{H^1},
\]
for all $n$.  For the case $|t|>T$, we can again readily obtain $H^1$ boundedness, while (for $t\in[T,\infty)$, say) we use Strichartz to obtain
\[
\|\tilde{v}_{n,T}\|_{S(\dot{H}^{s_c})}\lesssim \|e^{-iT\Delta} \tilde{v}_{n,T}(T)\|_{H^{s_c}}\lesssim \|\tilde{v}_{n,T}(T)\|_{H^1}\lesssim\|\psi\|_{H^1},
\]
for all $n$.  Thus we conclude that Claim $1$ holds. 

{\bf Claim $2$.} 
\[
\lim\limits_{T\rightarrow \infty}\limsup\limits_{n\rightarrow \infty} \|\tilde{v}_{n,T}(t_n)-\psi_n\|_{ H^{1}}=0.
\]	
We consider two cases: First, if $t_n\equiv 0$, then we apply the dominated convergence theorem and the fact that  $P_n \psi \rightarrow \psi$ strongly in $H^1$ to obtain
\[
\|\tilde{v}_{n,T}(t_n)-\psi_n\|_{H^1}=\|\chi_n P_n\psi-\psi\|_{H^1}\leq \|\chi_n P_n\psi-P_n\psi\|_{H^1}+\| P_n\psi-\psi\|_{H^1}\;\rightarrow 0,
\]
as $n\to\infty$.  Next, if $t_n\to \infty$ (say), then we estimate 
\begin{align*}
\|\tilde{v}_{n,T}(t_n)-\psi_n\|_{H^1} &=\|e^{-iT\Delta}\chi_n e^{iT\Delta} P_n\psi-\psi\|_{H^1} \\
& = \|\chi_n e^{iT\Delta} P_n\psi-e^{iT\Delta}\psi\|_{H^1}\\
& \leq \|(1-\chi_n)e^{iT\Delta} P_n\psi\|_{H^1}+\|P_n\psi-\psi\|_{H^1},
\end{align*}
which tends to zero as $n\to \infty$.

{\bf Claim $3$.} 
\begin{equation}\label{embedding-errors}
\lim\limits_{T\rightarrow \infty}\limsup\limits_{n\rightarrow \infty} \left\{\|e_n\|_{S'(\dot{H}^{-s_c})}+\|e_n\|_{S'(L^2)}+\|\nabla e_n\|_{S'(L^2)}\right\}=0,
\end{equation}
where 
\[
e_{n}= (i\partial_t + \Delta)\tilde{v}_{n,T} + |x|^{-b}|\tilde{v}_{n,T}|^{\alpha}\tilde{v}_{n,T}.
\] 
Note that the errors depend on $T$ as well as $n$, although our notation does not indicate this explicitly. 

To prove this claim, we consider separately the regions $|t|\leq T$ and $|t|>T$.

{\bf Case $1$: $|t|\leq T$}. In this region, we have 
\[
\tilde{v}_{n,T}(t,x)=[\chi_n e^{it\Delta}(t)P_n \psi](x-x_n).
\]
Thus we may write
\[
e_{n}=|x|^{-b}|\tilde{v}_{n,T}|^\alpha \tilde{v}_{n,T}+\left(\Delta \chi_n w_n+2\nabla \chi_n \nabla w_n\right):=e_{n,1}+e_{n,2}, 
\]
where 
\[
w_n(t,x-x_n):=e^{it\Delta}P_n\psi(x-x_n).
\]

We first estimate $e_{n,2}$.  First,
\begin{align*}
\|e_{n,2}\|_{L^1_{[t\leq T]}L^2_x} & \lesssim  T \|\Delta \chi_n\|_{L_x^\infty}\|w_n\|_{L_t^\infty L^2_x}+T \|\nabla \chi_n\|_{L_x^\infty}\|\nabla w_n\|_{L^\infty_t L^2_x}\\
& \lesssim  T \left( |x_n|^{-2}\|\psi\|_{L_x^2}+|x_n|^{-1}\|\nabla \psi\|_{L_x^2} \right)\\
& \lesssim T(|x_n|^{-2}+|x_n|^{-1})\|\psi\|_{H^1}\,\rightarrow 0,\;\;\textnormal{as}\;\;n\rightarrow \infty.  
\end{align*}
On the other hand, using Bernstein's inequality \eqref{BerIn} as well,
\begin{align*}
\|\nabla e_{n,2}\|_{L^1_{[t\leq T]}L^2_x} & \lesssim   T \left( |x_n|^{-3}+|x_n|^{-2}\right)\|\psi\|_{H^1}+T|x_n|^{-1}\|\Delta w_n\|_{L^2}\\
&\lesssim T(|x_n|^{-3}+|x_n|^{-2}+|x_n|^{-1+\theta})\|\psi\|_{H^1}\,\rightarrow 0,\;\;\textnormal{as}\;\;n\rightarrow \infty.  
\end{align*}

For the $S'(\dot H^{-s_c})$-norm, let  $2<q<\infty$ and $2<r< \tfrac{2N}{N-2}$ be an $\dot H^{-s_c}$-admissible pair, so that
\[
\tfrac{2}{q} + \tfrac{N}{r} = \tfrac{N}{2}+s_c.
\]
Then we may estimate the $L_t^{q'}L_x^{r'}$-norm on $|t|\leq T$ as follows: 
\begin{align*}
\|e_{n,2}\|_{L_t^{q'}L_x^{r'}} & \lesssim T^{\frac{1}{q'}}\left( \|\Delta \chi_n\|_{L_x^{\frac{2r}{r-2}}} + \|\nabla \chi_n\|_{L_x^{\frac{2r}{r-2}}} \right) \| w_n\|_{L_t^\infty H^1} \\
& \lesssim T^{\frac{1}{q'}}\left( |x_n|^{-2+\frac{N}{2}-\frac{N}{r}} + |x_n|^{-1+\frac{N}{2}-\frac{N}{r}}\right)\|\psi\|_{H^1} \\
& = o(1), \qtq{as}n\to\infty,
\end{align*}
since the condition $r<\frac{2N}{N-2}$ guarantees that the powers of $|x_n|$ appearing above are negative. 

It remains to estimate $e_{n,1}$.  Using the definition of $\chi_n$, we see that we have the estimate $|x|^{-b}\lesssim |x_n|^{-b}$ on the support of this term.  

We first fix an $L^2$-admissible pair $(q,r)$ with $2<q<\infty$ and $(1-\alpha)q<2$ (i.e. $q<\tfrac{2}{1-\alpha}$ if $\alpha<1$, with no further constraint when $\alpha\geq 1$). In particular, this guarantees that we have $2<q'(\alpha+1)<\infty$. We now estimate as follows (still on the space-time slab $\{|t|\leq T\}\times\R^N$):
\begin{align*}
\| e_{n,1}\|_{L_t^{q'}L_x^{r'}} & \lesssim \| |x|^{-b}\|_{L_x^\gamma(|x|>|x_n|)} \|w_n\|_{L_t^{q'(\alpha+1)}L_x^\rho}^{\alpha+1},
\end{align*}
where $\gamma$ will be chosen to be greater than $\tfrac{N}{b}$, and $\rho$ is then determined by the scaling relation
\[
\tfrac{\alpha+1}{\rho}=1-\tfrac{1}{r}-\tfrac{1}{\gamma},\qtq{i.e.}\rho=\tfrac{(\alpha+1)\gamma r}{\gamma (r-1)-r}. 
\]
Note in particular that the denominator appearing in the definition of $\rho$ is positive.  Indeed, this follows from the fact that $r>2>\tfrac{N}{N-b}$, which in turn uses $b<\tfrac{N}{2}$. We now verify that 
\[
\rho\geq r_0:=\tfrac{2Nq(\alpha+1)}{q(N(\alpha+1)-4)+4},
\] 
where $r_0$ is the exponent such that $(q'(\alpha+1),r_0)$ yields an $L^2$-admissible pair.  For this, we note that when $\gamma=\tfrac{N}{b}$, using that $(q,r)$is $L^2$-admissible the strict inequality $\rho>r_0$ reduces exactly to $\alpha>\tfrac{4-2b}{N}$. Thus, by choosing $\gamma-\tfrac{N}{b}$ small enough (depending on $\alpha$), we may guarantee the desired condition for $\rho$.  We now continue from above, using Strichartz and Bernstein inequalities to obtain
\begin{align*}
\|e_{n,1}\|_{L_t^{q'}L_x^{r'}} & \lesssim |x_n|^{-b+\frac{N}{\gamma}}\||\nabla|^{\frac{1}{\alpha+1}\left(\frac{N\alpha}{2}+\frac{N}{\gamma}-2\right)} w_n\|_{L_t^{q'(\alpha+1)}L_x^{r_0}}^{\alpha+1} \\
& \lesssim |x_n|^{-b+\frac{N}{\gamma}+\frac{\theta}{\alpha+1}\left(\frac{N\alpha}{2}+\frac{N}{\gamma}-2\right)}\|\psi\|_{L_x^2}^{\alpha+1}. 
\end{align*}
Choosing $\theta$ small enough, we see that this quantity is $o(1)$ as $n\to\infty$.

We can estimate the derivative in the same space.  Indeed, if the derivative lands on $|x|^{-b}$ or on the cutoff $\chi_n$, we obtain the same estimate with an additional power of $|x_n|^{-1}$. If the derivative lands on $|w_n|^{\alpha}w_n$, then we proceed as above, obtaining a bound that involves the term $\|\psi\|_{L_x^2}^\alpha\|\nabla \psi\|_{L_x^2}$.  This is also acceptable.  

It remains to find a suitable $S'(\dot H^{-s_c})$ space in which to estimate. We  proceed similarly to the above, this time choosing $(\hat q,\hat r)$ to be an $\dot H^{-s_c}$-admissible pair with $2<\hat q<\infty$ and $(1-\alpha)\hat q<2$.  Note that it is possible to find an admissible pair with this choice of time exponent.  Indeed, combined with $\hat r<\tfrac{2N}{N-2}$ the constraints reduce to the inequality 
\[
\alpha + \tfrac{N}{2}-\tfrac{2-b}{\alpha}>0,
\]
which holds due to the assumption $\alpha>\tfrac{4-2b}{N}$. We begin with the same estimate as before:
\[
\|e_{n,1}\|_{L_t^{\hat q'}L_x^{\hat r'}} \lesssim \||x|^{-b}\|_{L^{\hat \gamma}(|x|>|x_n|)}\|w_n\|_{L_t^{\hat q'(\alpha+1)}L_x^{\hat\rho}}^{\alpha+1},
\]
with $\hat\gamma>\tfrac{N}{b}$ and 
\[
\tfrac{\alpha+1}{\hat\rho}=1-\tfrac{1}{\hat r}-\tfrac{1}{\hat\gamma},\qtq{i.e.}\hat\rho=\tfrac{(\alpha+1)\hat\gamma\hat r}{\hat\gamma(\hat r-1)-\hat r}. 
\]
As before, $\hat\gamma(\hat r-1)-\hat r>0$ follows from $\hat\gamma>\tfrac{N}{b}$, $b<\tfrac{N}{2}$ and $\hat r>2$. Similarly, we would like to guarantee that
\[
\hat \rho \geq \hat r_0 :=\tfrac{2N\hat q(\alpha+1)}{\hat q(N(\alpha+1)-4)+4} 
\]
where $\hat r_0$ is the exponent such that $(\hat q'(\alpha+1),\hat r_0)$ yields an $L^2$-admissible pair. For $\hat \gamma = \tfrac{N}{b}$, the strict inequality $\hat\rho > \hat r_0$ reduces to the inequality 
\[
p(\alpha):=N\alpha^2+(N-4+2b)\alpha-(4-2b)>0. 
\]
Now observe that $p(\tfrac{4-2b}{N})=0$ while $p'(\alpha)>0$ for $\alpha>\tfrac{4-2b}{N}$; indeed, the latter inequality reduces to $b<2+\tfrac{N}{2}$. In particular we have $p(\alpha)>0$ all cases under consideration, and the inequality is preserved by choosing $\hat\gamma - \tfrac{N}{b}>0$ sufficiently small. Continuing from above and estimating as before, we obtain
\begin{align*}
\|e_{n,1}\|_{L_t^{\hat q'}L_x^{r'}} & \lesssim |x_n|^{-b+\frac{N}{\hat \gamma}}\| |\nabla|^{\frac{1}{\alpha+1}\left(\frac{N\alpha}{2}+\frac{N}{\hat \gamma}-2+s_c\right)} w_n\|_{L_t^{\hat q'(\alpha+1)}L_x^{\hat r_0}}^{\alpha+1} \\
& \lesssim |x_n|^{-b+\frac{N}{\hat\gamma}+\frac{\theta}{\alpha+1}\left(\frac{N\alpha}{2}+\frac{N}{\hat \gamma}-2+s_c\right)}\|\psi\|_{L_x^2}^{\alpha+1}.
\end{align*}
In particular, this quantity is $o(1)$ as $n\to\infty$ provided $\theta$ is chosen sufficiently small.

{\bf Case $2$: $t>T$}.  In this regime, we have
\[
e_n = |x|^{-b}|\tilde v_{n,T}|^\alpha \tilde v_{n,T},\quad \tilde v_{n,T}(t,x)=e^{i(t-T)\Delta}[\chi_n(x-x_n)e^{iT\Delta}P_n\psi(x-x_n)]. 
\]
Here the vanishing in \eqref{embedding-errors} will essentially be a consequence of Strichartz estimates and the monotone convergence theorem.  We will rely on the nonlinear estimates appearing in Lemma~\ref{L:NL}. In particular, we may obtain
\begin{align*}
\|e_n&\|_{S'(L^2)}  + \|\nabla e_n\|_{S'(L^2)} + \|e_n\|_{S'(\dot H^{-s_c})} \\
& \lesssim \| \tilde v_{n,T}\|_{L_t^\infty H_x^1}^\theta\|\tilde v_{n,T}\|_{S(\dot H^{s_c})}^{\alpha-\theta}\left(\|\tilde v_{n,T}\|_{S(\dot H^{s_c})}+\|\tilde v_{n,T}\|_{S(L^2)}+\|\nabla \tilde v_{n,T}\|_{S(L^2)}\right) \\
& \quad + \|\tilde v_{n,T}\|_{L_t^\infty H_x^1}^{1+\theta}\|\tilde v_{n,T}\|_{S(\dot H^{s_c})}^{\alpha-\theta},
\end{align*}
for some $\theta\in(0,\alpha)$, where all spacetime norms are restricted to $[T,\infty)\times\R^N$. In practice, each norm above is represented by some particular admissible pair.

Thus it suffices to establish the following:
\begin{itemize}
\item[(a)] $\tilde v_{n,T}$ is uniformly bounded in $H^1$ for $t\geq T$.
\item[(b)] We have
\[
\lim_{T\to\infty}\lim_{n\to\infty}[\|\tilde v_{n,T}\|_{S(L^2)} + \|\nabla\tilde v_{n,T}\|_{S(L^2)}+\|\tilde v_{n,T}\|_{S(\dot H^{s_c})}]=0,
\]
where all space-time norms are restricted to $[T,\infty)\times\R^N$.
\end{itemize}
Item (a) is clear, as $P_n$, multiplication by $\chi_n$, and $e^{i\cdot\Delta}$ are all uniformly bounded operators from $H^1\to H^1$.  Therefore we turn to item (b). We first consider the term $\nabla \tilde v_{n,T}$ in $S(L^2)$. We begin by fixing a representative $L^2$-admissible pair $(q,r)$ and estimating
\begin{align*}
\|\nabla \tilde v_{n,T}\|_{L_t^q L_x^r(\{t>T\})} & \lesssim \|\nabla e^{it\Delta}[\chi_n e^{iT\Delta}P_n\psi]\|_{L_t^qL_x^r(\{t>0\})} \\
& \lesssim \| \nabla e^{it\Delta}P_n\psi\|_{L_t^q L_x^r(\{t>T\})} + \|\nabla\left[(\chi_n-1)e^{iT\Delta}P_n\psi\right]\|_{L_x^2}. 
\end{align*}
For fixed $T$, the second term above tends to zero as $n\to\infty$ by the dominated convergence theorem.  On the other hand, by Strichartz, the first term is bounded by $\|\psi\|_{L_x^2}$ for all $T\geq 0$, and hence tends to zero by the monotone convergence theorem. 

Similar arguments suffice to control the $S(L^2)$ and $S(\dot H^{s_c})$ norms, and hence we conclude that item (b) holds. This completes the treatment of Case 2, and hence we finish the proof of \eqref{embedding-errors} and Claim 3. 

With the three claims in place, we may apply the stability result (Proposition~\ref{stability}) to deduce the existence of a global solution $v_n$ to \eqref{INLS} satisfying
$v_n(0) = \psi_n$ and 
\[
\|v_n\|_{S(\dot{H}^{s_c})}
+ \|v_n\|_{S(L^2)} + \|\nabla v_n\|_{S(L^2)} \lesssim 1,
\]
for all $n$ sufficiently large. The proof of \eqref{aprox} now follows as in \cite[Proposition~$3.3$]{MMZ}.  This completes the proof. \end{proof}

\section{Proof of the main result}\label{S:proof}

The main result, Theorem~\ref{T} is proven by contradiction.  Supposing that the theorem fails for some choice of $(N,b,\alpha)$, Proposition~\ref{P:exist} guarantees the existence of a minimal blowup solution, namely, a non-zero global solution $u$ to \eqref{INLS} that obeys (i) the orbit $\{u(t):t\in\R\}$ is pre-compact in $H^1$ and (ii) the solution lives below the ground state threshold, i.e. 
\[
M[u]^{1-s_c}E[u]^{s_c}<M[Q]^{1-s_c}E[Q]^{s_c}\,\,\,\textnormal{and}\,\,\,\|u(t)\|_{L^2}^{1-s_c}\|\nabla u(t)\|_{L^2}^{s_c} < \|Q\|_{L^2}^{1-s_c} \|\nabla Q\|_{L^2}^{s_c}. 
\]

At this point, we may now argue exactly as in \cite{FG2} and utilize a localized virial argument to derive a contradiction, thus establishing Theorem~\ref{T}.  In particular, the necessary coercivity in the virial argument comes from Lemma~\ref{L:coercive}(iii), while the pre-compactness in $H^1$ allows for the virial identity to be localized in a suitable manner. 

In particular, we let $R>0$ and introduce the quantity $z_R$ as in Section~\ref{S:virial}. Using uniform $H^1$ boundedness, Lemma~\ref{L:coercive}(iii), \eqref{zRbd}, and \eqref{errors}, we then utilize the fundamental theorem of calculus (in the form $\int_0^T z_R''(t)\,dt = z_R'(T)-z_R'(0)$) to obtain the following estimate for an arbitrary $T>0$:
\begin{align*}
\delta&\int_0^T \|\nabla u(t)\|_{L^2}^2 \,dt \\
& \lesssim R\|u\|_{L_t^\infty H_x^1}^2 + \int_0^T \int_{|x|>R} |\nabla u(t,x)|^2 + \tfrac{1}{R^2}|u(t,x)|^2 + \tfrac{1}{R^b}|u(t,x)|^{\alpha+2}\,dx\,dt
\end{align*}
for some $\delta>0$.  In particular, using pre-compactness in $H^1$, we let $\eps>0$ to be determined below and choose $R=R(\eps)>0$ sufficiently large (independent of $T$) to obtain
\[
\delta\int_0^T \|\nabla u(t)\|_{L^2}^2\,dx \lesssim R\|u\|_{L_t^\infty H_x^1}^2+\eps T.
\]
As Lemma~\ref{L:coercive}(i) (and the fact that $u\not\equiv 0$) implies the uniform lower bound $\|\nabla u(t)\|_{L^2}^2\gtrsim E[u_0]>0$, we may now deduce that
\[
E[u_0] \lesssim \tfrac{R}{T\delta} + \tfrac{\eps}{\delta}
\]
for any $T>0$.  In particular, choosing $\eps=\eps(\delta)$ sufficiently small and $T=T(R(\eps),\delta)$ sufficiently large, we can now obtain a contradiction to the fact that $E[u_0]>0$. This precludes the possibility of minimal blowup solutions as in Proposition~\ref{P:exist} and hence completes the proof of Theorem~\ref{T}.




\end{document}